\newtheorem{thm}{Theorem} 
\newtheorem{lemma}{Lemma} 
\newtheorem{coro}{Corollary}
\let\paragraph\subsection
\title{Eigenvalue bounds of the Kirchhoff Laplacian}
\author{Oliver Knill}
\date{August 11, 2022, Updated May 21, 2024}
\address{Department of Mathematics \\ Harvard University \\ Cambridge, MA, 02138 }
\subjclass{}
\keywords{Kirchhoff Laplacian, Spectral Graph theory, graphs, multi graphs, quivers}
\begin{document}
\maketitle

\begin{abstract}
We prove the inequality $\lambda_k \leq d_k+d_{k-1}$ for all the eigenvalues
$\lambda_1 \leq \lambda_2 \leq \cdots \leq \lambda_n$ 
of the Kirchhoff matrix $K$ of a finite simple graph or quiver with
vertex degrees $d_1 \leq d_2 \leq \cdots \leq d_n$ and assuming $d_0=0$. 
Without multiple connections, the inequality 
$\lambda_k \geq {\rm max}(0,d_k-(n-k))$ holds. 
A consequence in the finite simple graph or multi-graph case is that the 
pseudo determinant ${\rm Det}(K)$ counting the 
number of rooted spanning trees has an upper bound $2^n \prod_{k=1}^n d_k$
and that ${\rm det}(1+K)$ counting the number of rooted spanning forests 
has an upper bound $\prod_{k=1}^n (1+2d_k)$.
\end{abstract}

\section{The theorem}

\paragraph{}
Let $G=(V,E)$ be a {\bf quiver} with $n$ vertices. 
If it has no self-loops, this is a {\bf finite multi-graph}. Without
self-loops and no multiple-connections, this is a {\bf finite simple graph}. 
Denote by $\lambda_1 \leq \lambda_2 \leq \cdots \leq \lambda_n$ the 
{\bf ordered list of eigenvalues} of the {\bf Kirchhoff matrix} $K=B-A$, where $B$ is the
non-negative diagonal vertex degree matrix with {\bf ordered vertex degrees}
$d_1 \leq d_2 \leq \cdots \leq d_n$ and where $A$ is the non-positive  
{\bf adjacency matrix} of $G$.  

\paragraph{}
We assume $d_0=0$ so that $d_1+d_0=d_1$ and prove:

\begin{thm} $\lambda_k \leq d_k+d_{k-1}$, 
for all $1 \leq k \leq n$ and all quivers with $n$ vertices.  \label{1} \end{thm}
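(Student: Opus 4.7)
The plan is a two-stage reduction: use Cauchy interlacing to replace $K$ by a suitable $k \times k$ principal submatrix, and then bound the top eigenvalue of that submatrix by a Gershgorin estimate on an edge-space Gram matrix.

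First, I order the vertices so that $d_1 \le d_2 \le \cdots \le d_n$ and let $W = \{v_1,\dots,v_k\}$ be the vertices of the $k$ smallest degrees. Let $M = K[W]$ be the principal submatrix of $K$ indexed by $W$. Cauchy interlacing applied to the Hermitian matrix $K$ gives $\lambda_k(K) \le \lambda_k(M) = \lambda_{\max}(M)$, so it suffices to prove the single bound $\lambda_{\max}(M) \le d_k + d_{k-1}$.

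For the bound on $M$, I would factor $M = N N^T$, where $N$ is a $k \times m'$ matrix carrying two kinds of columns: a signed-incidence column $e_u - e_v$ for each edge of $G$ lying inside $W$, and a ``half-edge'' column $e_v$ for each edge of $G$ joining $v \in W$ to the complement $V \setminus W$. Geometrically, $M$ is the Kirchhoff matrix of the auxiliary graph obtained from $G[W]$ by grounding every edge that leaves $W$ into a pendant half-edge at its $W$-endpoint. Since $M = N N^T$ and $N^T N$ share their non-zero spectra, $\lambda_{\max}(M) = \lambda_{\max}(N^T N)$, and I would then read off the bound by Gershgorin applied to $N^T N$: its diagonal is $2$ on internal-edge rows and $1$ on half-edge rows, and its off-diagonal entries have absolute value $1$ or $2$, supported only on pairs of columns that share a vertex in $W$. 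A short row-by-row bookkeeping shows that the Gershgorin row sum at an internal edge $(u,v)$ collapses to exactly $d_G(u) + d_G(v) \le d_k + d_{k-1}$, while the row sum at a half-edge at $v$ collapses to $d_G(v) \le d_k$. Taking the maximum over rows then gives $\lambda_{\max}(N^T N) \le d_k + d_{k-1}$, which finishes the chain.

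The main obstacle is precisely this Gershgorin accounting in the quiver setting, where parallel edges between $u$ and $v$ produce off-diagonal entries of absolute value $2$ (not $1$) in $N^T N$; one needs to check that this extra contribution is exactly compensated by the corresponding drop in the count of distinct $W$-neighbours, so that the row sum really collapses to $d_G(u) + d_G(v)$. The degenerate cases $k=1$ and $d_{k-1} = 0$ (the latter forcing $v_1,\dots,v_{k-1}$ to be isolated, so that $M$ becomes essentially diagonal with entry $d_k$) are short separate verifications. I note that it is essential to apply Gershgorin to $N^T N$ directly rather than, say, invoking Anderson--Morley on the padded graph with pendant leaves: the latter would concede a $+1$ on each half-edge term and would be just too weak when $d_{k-1}=0$.
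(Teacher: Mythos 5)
Your argument is correct and is essentially the paper's proof with the induction unrolled: the paper deletes the largest-degree vertex one at a time, applies Cauchy interlacing at each step, and invokes the Anderson--Morley column-sum bound on $FF^T$ for the top eigenvalue of each intermediate quiver, which amounts exactly to your one-shot reduction to the principal submatrix on the $k$ lowest-degree vertices followed by Gershgorin on the edge Gram matrix $N^TN$. The structural fact you rely on --- that $K[W]$ is again the Kirchhoff matrix of a quiver, with edges leaving $W$ turned into loops at their $W$-endpoints so that diagonal entries (degrees) are preserved --- is precisely the paper's lemma on principal submatrices, and your bookkeeping of the row sums (internal edge rows giving $d(u)+d(v)\leq d_k+d_{k-1}$, loop rows giving $d(v)\leq d_k$) matches the paper's proof of its Lemma~1.
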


\paragraph{}
For finite simple graphs, the case $k=n$ is the {\bf spectral radius} estimate 
of Anderson and Morley \cite{AndersonMorley1985}. Their estimate generalizes to quivers:

\begin{lemma} $\lambda_n \leq d_n +d_{n-1}$ for all quivers with 
$n$ vertices. \end{lemma}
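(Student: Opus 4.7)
The plan is to reduce from the spectrum of the $n\times n$ matrix $K$ to that of an $|E|\times|E|$ matrix via the factorization $K=D^T D$, where $D$ is the signed incidence matrix obtained after fixing an arbitrary orientation of each non-loop edge. Self-loops contribute zero rows to $D$ and so drop out of the calculation. Since $K=D^T D$ and $DD^T$ share all nonzero eigenvalues, one has $\lambda_n=\lambda_{\max}(DD^T)$, and the latter will be estimated by Gerschgorin's disc theorem applied to $DD^T$.

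\paragraph{}
The main step is the computation of the Gerschgorin radius for the row of $DD^T$ indexed by an edge $e=\{u,v\}$. The diagonal entry equals $\|D_e\|^2=2$, and for another edge $f$ the inner product $\langle D_e,D_f\rangle$ has absolute value $1$ when $f$ shares exactly one endpoint with $e$, absolute value $0$ or $2$ when $f$ is parallel to $e$, and $0$ otherwise. A careful count that treats each parallel edge as a single off-diagonal entry (rather than as two separate contributions at its two endpoints) shows that the off-diagonal row sum is at most $d_u+d_v-2$. Gerschgorin then gives $\lambda_n\le\max_{e=\{u,v\}\in E}(d_u+d_v)$, and since the two endpoints of an edge are distinct vertices, $d_u+d_v\le d_{n-1}+d_n$, which is the desired bound.

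\paragraph{}
The one place that needs attention is the bookkeeping for multi-edges: a parallel edge $f\neq e$ shares \emph{both} endpoints with $e$, so its contribution to the Gerschgorin radius is a single entry of absolute value $0$ or $2$ rather than two entries of absolute value $1$ at $u$ and at $v$. This is the only adjustment the quiver case requires compared with the original Anderson--Morley argument for simple graphs; self-loops cause no extra difficulty since they contribute zero rows to $D$ while only enlarging the right-hand side.
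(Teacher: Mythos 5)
Your overall strategy coincides with the paper's: factor the Kirchhoff matrix through an incidence matrix, pass to the edge-indexed matrix $DD^T$ (which shares the nonzero spectrum), and bound its spectral radius by absolute row sums; your bookkeeping for parallel edges is also correct and is exactly the adjustment the multi-edge case needs. The genuine gap is in the treatment of self-loops, which is precisely what distinguishes a quiver from a multi-graph in this paper. With your convention that a loop contributes a zero row to $D$, the factorization $K=D^TD$ is false: one gets $D^TD=K-\Lambda$, where $\Lambda$ is the diagonal matrix of loop counts, because the degree matrix $B$ here counts each loop once. Hence $\lambda_{\max}(DD^T)=\lambda_{\max}(K-\Lambda)$ can be strictly smaller than $\lambda_n(K)$, and your identity $\lambda_n=\lambda_{\max}(DD^T)$, together with the ensuing Gershgorin estimate, bounds the wrong matrix. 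The one-vertex quiver with $l$ loops makes this concrete: $D=0$, so your argument yields $0$, while $\lambda_1=l$ (and the claimed bound $d_1+d_0=l$ is attained). The remark that loops ``only enlarge the right-hand side'' does not repair this, since they also enlarge the quantity $\lambda_n(K)$ that you silently replaced by $\lambda_{\max}(D^TD)$.

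The fix is the convention the paper adopts for its gradient $F$: a loop $(a,a)$ contributes a row with a single entry $1$ in column $a$, not a zero row, so that $K=F^TF$ holds exactly with loops included. Then in $FF^T$ a loop row has diagonal entry $1$ and off-diagonal entries of absolute value at most $1$ for each other edge incident to $a$, giving absolute row sum at most $d_a\le d_n+d_{n-1}$; the row of a proper edge $e=(u,v)$ acquires an extra $\pm 1$ inner product for each loop at $u$ or $v$, and since those loops are counted once each in $d_u+d_v$, the absolute row sum is still exactly $d_u+d_v\le d_n+d_{n-1}$. With that change of convention your computation closes the argument.
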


\paragraph{}
The details are in the proof section. The key is that we can still 
factor $K=F^T F$ and that the spectral radius of $K=K_0=F^T F$ agrees with the
spectral radius of $K_1=F F^T$ if $F$ is the incidence gradient matrix of
the quiver and because the spectral radius of $K_1$ is bounded
above by the maximal absolute column sum $d_n+d_{n-1}$ of $K_1$.

\paragraph{}
The case $k=1$ holds because 
$\lambda_1 \geq 0$ for quivers. The case $k=2$ follows from the Schur-Horn 
inequality \cite{Brouwer} as $\lambda_2 = \lambda_1+\lambda_2 \leq d_1+d_2$. 
Already $\lambda_3 \leq d_3+d_2$ goes beyond Schur-Horn. Any better estimate of the 
largest eigenvalue $\lambda_n$ in terms of $d_k$ would give also improved bounds on 
the general eigenvalue $\lambda_k$. Improvements like
\cite{GroneMerrisSunder1,GroneMerrisSunder2,Zhang2004,Das2004,
Guo2005,Shi2007,ShiuChan2009, LiShiuChan2010,ZhouXu}
all also would lead to analog bounds on all eigenvalues $\lambda_k$.

\paragraph{}
A lower bound $\lambda_k \geq 0$ holds for all quivers
because any matrix $K=F^T F$ has non-negative eigenvalues.
For graphs, a conjecture of Guo \cite{Guo2007} asked 
$\lambda_k \geq d_k-(n-k)+1$ for $k \geq 2$. This was proven in 
\cite{BrouwerHaemers2008} after some cases 
\cite{GroneMerrisSunder2,LiPan1999,Guo2007}. See also \cite{Brouwer} Proposition 3.10.2).  
The Brouwer-Haemers bound $\lambda_k \geq d_k-(n-k)+1$ has exceptions 
for $k=1$ like for $G=K_n$ \cite{BrouwerHaemers2008}. 
The weaker Brouwer-Haemers bound given in Theorem~2 works for all
quivers as long as no multiple connections are present.

\begin{thm} For multiple-connection free quivers,
 $\lambda_k \geq {\rm max}(0,d_k-(n-k))$ for all $1 \leq k \leq n$. \label{2} \end{thm}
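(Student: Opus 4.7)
The plan is to prove Theorem~\ref{2} by induction on the number of vertices $n$, using the closure of the class of ``quivers without multiple connections'' under passing to principal submatrices (the remark just above in the excerpt) combined with the Cauchy interlacing theorem, and closing the boundary case $k=1$ with a Gershgorin-type lower bound.

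\paragraph{}
The base case $n=1$ is trivial, since $K$ is a $1\times 1$ matrix with single eigenvalue $d_1$ and the claim reads $d_1 \geq d_1 - 0$. For the inductive step with $n \geq 2$, pick a vertex $v_1$ of minimum degree $d_1$ and form the principal submatrix $K' = K[V\setminus\{v_1\}]$. By the closure property, $K'$ is the Kirchhoff matrix of a quiver $G'$ on $n-1$ vertices in the same class, and because principal submatrices preserve diagonal entries, the sorted degree sequence of $G'$ is $d_2 \leq d_3 \leq \cdots \leq d_n$. Let $\lambda'_1 \leq \cdots \leq \lambda'_{n-1}$ denote its eigenvalues.

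\paragraph{}
Applying the inductive hypothesis to $G'$ yields $\lambda'_j \geq d_{j+1} - (n-1-j)$ for $1 \leq j \leq n-1$, while Cauchy interlacing for an $(n-1)\times(n-1)$ principal submatrix yields $\lambda'_j \leq \lambda_{j+1}$. Chaining these with $k = j+1$ gives
\[
\lambda_k \;\geq\; \lambda'_{k-1} \;\geq\; d_k - (n-k), \qquad 2 \leq k \leq n.
\]

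\paragraph{}
The case $k=1$ is not reachable from interlacing and needs a separate argument. The hypothesis ``no multiple connections'' caps each off-diagonal $|K_{uv}|$ by $1$, so the off-diagonal absolute row sum $R_v$ at any row $v$ is at most $n-1$. Gershgorin's theorem then localizes every eigenvalue in $\bigcup_v [K_{vv}-R_v,\, K_{vv}+R_v]$, and with $K_{vv} = d_v$ one gets
\[
\lambda_1 \;\geq\; \min_v (K_{vv} - R_v) \;\geq\; \min_v\bigl(d_v - (n-1)\bigr) \;=\; d_1 - (n-1).
\]

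\paragraph{}
The main obstacle I expect is this $k=1$ boundary: the interlacing chain controls $\lambda_k$ in terms of $\lambda'_{k-1}$ and becomes vacuous at $k=1$, so one must fall back on a direct spectral estimate. Gershgorin is the natural device because the hypothesis ``no multiple connections'' translates precisely into the row-sum bound $R_v \leq n-1$ needed to make the estimate reach $d_1 - (n-1)$; this is the one step where the hypothesis enters numerically, rather than only through the closure-under-principal-submatrices remark invoked for $k \geq 2$.
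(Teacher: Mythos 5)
Your proof is correct, and its skeleton -- induction on $n$, deleting the minimum-degree vertex, using closure of the class under principal submatrices, and chaining Cauchy interlacing $\lambda_k \geq \lambda'_{k-1}$ with the inductive hypothesis for $k \geq 2$ -- is exactly the paper's argument. Where you genuinely diverge is the boundary case $k=1$, which the paper isolates as a separate lemma ($\lambda_1 \geq d_1-(n-1)$ for quivers without multiple connections) and proves by a second induction: on the number of loops at the minimum-degree vertex, repeatedly stripping one loop from every vertex (which subtracts the identity from $K$, shifting all eigenvalues and degrees down by $1$) until the loop-free case is reached, where $d_1 \leq n-1$ and $\lambda_1 \geq 0$ finish the job. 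You instead get the same bound in one line from Gershgorin: no multiple connections caps each off-diagonal entry at $1$, so the off-diagonal row sum $R_v$ is at most $n-1$ while the diagonal entry is $d_v$, giving $\lambda_1 \geq \min_v (d_v - R_v) \geq d_1 - (n-1)$. Your route is shorter and makes the role of the ``no multiple connections'' hypothesis more transparent (it is exactly the row-sum bound); the paper's loop-stripping argument is more in the spirit of its other reductions and reuses the already-established fact $\lambda_1 \geq 0$ rather than importing an external localization theorem. Your base case $n=1$ is also stated more cleanly than the paper's ($\lambda_1 = d_1 \geq d_1 - 0$, valid even with loops). Both proofs are sound.
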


\paragraph{}
Similarly as Lemma~1 kept a universal upper bound for $\lambda_n$, 
the lower bound only requires to control $\lambda_1$. Also the following
Lemma~2 is proven in the proof section. 

\begin{lemma} 
$\lambda_1 \geq d_1-(n-1)$ for all quivers without multiple connections. 
\end{lemma}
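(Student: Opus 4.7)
The plan is to apply Weyl's min-max inequality to the splitting $K = B - A$, where $B = \mathrm{diag}(d_1, \ldots, d_n)$ is the diagonal part of $K$ and $A = B - K$ is the symmetric, zero-diagonal adjacency matrix whose off-diagonal entries lie in $\{0, 1\}$. Weyl's inequality gives
\[
\lambda_1(K) \;\geq\; \lambda_1(B) + \lambda_1(-A) \;=\; d_1 - \lambda_n(A),
\]
so the lemma reduces to the spectral-radius estimate $\lambda_n(A) \leq n - 1$.

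\paragraph{}
Here the no-multiple-connections hypothesis enters directly. Since every off-diagonal entry of $A$ is $0$ or $1$ and the diagonal of $A$ is zero, we have the entry-wise domination $A \leq J - I$, where $J$ is the all-ones matrix. Both $A$ and $J - I$ are symmetric and nonnegative, so the Perron--Frobenius monotonicity of the spectral radius yields
\[
\lambda_n(A) \;=\; \rho(A) \;\leq\; \rho(J - I) \;=\; n - 1.
\]
Combining with the Weyl step gives $\lambda_1(K) \geq d_1 - (n-1)$.

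\paragraph{}
The only delicate step is the spectral bound $\lambda_n(A) \leq n - 1$: multiple edges would produce entries of $A$ greater than $1$ and destroy the comparison with $J - I$, so the hypothesis is essential. Self-loops in the underlying quiver contribute to the diagonal $B$ of $K$ but not to the off-diagonal block from which $A$ is extracted, so they cause no difficulty. After these observations the argument is a single application of Weyl's inequality plus a Perron--Frobenius comparison, and I expect no further obstacles.
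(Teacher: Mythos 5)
Your proof is correct, but it takes a genuinely different route from the paper's. The paper argues by induction on the number $l$ of self-loops at the minimal-degree vertex: for $l=0$ a vertex without multiple connections has at most $n-1$ neighbours, so $d_1-(n-1)\leq 0\leq \lambda_1$; in the inductive step one removes one loop from every vertex, which subtracts the identity from $K$ and shifts all eigenvalues and all degrees down by $1$. Your argument instead splits $K=B-A$ and applies the Weyl bound $\lambda_{\min}(B-A)\geq \lambda_{\min}(B)+\lambda_{\min}(-A)=d_1-\lambda_{\max}(A)$, then bounds $\lambda_{\max}(A)\leq n-1$ by the entrywise comparison $0\leq A\leq J-I$ (one could equally well invoke the row-sum bound $\lambda_{\max}(A)\leq \|A\|_\infty\leq n-1$, which avoids even the Perron--Frobenius monotonicity). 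Both steps are sound: loops enter only the diagonal matrix $B$, and the no-multiple-connections hypothesis is exactly what forces the off-diagonal entries of $A$ into $\{0,1\}$, as you note. Your approach is more direct and handles loops with no case analysis, at the cost of invoking Weyl and a spectral-radius comparison; the paper's proof is more elementary, resting only on $\lambda_1\geq 0$ and the shift-by-identity trick, and is in the same inductive spirit as the rest of the paper. It is worth noting that the paper elsewhere records the related but weaker consequence $\lambda_k(B-A)\geq\lambda_k(-A)$ of the same decomposition; your version retains the $d_1$ term, which is what the lemma needs.
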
 

\paragraph{}
This is sharp for the complete graph $K_n$ on $n$ vertices having no loops 
because $\lambda_1=0$ and $d_1=(n-1)$. For all
multi-graphs (quivers without loops), $\lambda_1=0$ with constant eigenvector. 
Lemma~2 does not generalize to quivers with multiple connections. An example is
the {\bf ribbon graph} $(V,E) =(\{1,2\},\{ (1,2),(1,2),(1,2)\})$, where
$d_1=d_2=3,n=2,\lambda_1=0,\lambda_2=6$, so that the left hand side
is $0$ but the right hand side is $3-(2-1)=2$. With $m$ strands in the ribbon,
we have $\lambda_1=0,\lambda_2=2m$ and $d_1=d_2=m$. The right hand side of 
Lemma~2 is now $m-1$ while the left hand side is either $0$ or $2m$. 

\paragraph{}
Also the proof of Lemma~2 is in the proof section. The main point is 
that because $\lambda_1 \geq 0$ holds for all quivers, we only need 
to cover $d_1>n-1$ which means $d_k>n-1$ for all $k$.
This implies without multiple connections that $l=d_k-(n-1)$ loops are present
at each node. But this allows to remove $l$ loops at each node to get
quiver eigenvalues $\lambda_k-l$ and degrees $d_k-l$ for all $k$. 
For a quiver without multiple connections and loops, the 
vertex degree node satisfies $d_1 \leq n-1$ and so $d_1-(n-1) \leq 0$.

\paragraph{}
It follows also from the expression $K=B-A$ with 
$B={\rm Diag}(d_1,\cdots d_n)$ positive semi-definite 
that $\lambda_k(K)=\lambda_k(B-A) \geq \lambda_k(-A)$ 
(\cite{HornJohnson2012} Corollary 4.3.12).
This {\bf Horn-Johnson lower bound} is valuable because it holds for all quivers.
It relies on the fact that $B$ is a positive semi-definite symmetric matrix. 
Horn-Johnson is somehow complementary to 
{\bf Brouwer-Haemers} which Theorem~2 extends to quivers, with a slight penalty
of 1 on the right hand side but which also makes it true for $k \geq 1$, while
the original Brouwer-Hamers bound holds for $k \geq 2$ for finite simple graphs. 
We see in the illustration section below that even for graphs,
like a bipartite or a grid graph, the two bounds can compete.

\paragraph{}
For a quiver $G=(V,E)$, the {\bf Euler characteristic} as a one-dimensional 
complex is $\chi(G)=|V|-|E|$. 
The union $V \cup E$ is technically not a {\bf finite abstract simplicial
complex} but a more general one-dimensional {\bf delta set}. Quivers could
actually be identified with delta sets of dimension $\leq 1$. The
{\bf cohomological Euler characteristic} is $b_0(G)-b_1(G)$, where 
$b_k(G)={\rm dim}({\rm ker}(K_k))$ with $K_0=F^T F$ and $K_1=F F^T$. 
The {\bf Euler-Poincar\'e formula} $\chi(G)=b_0(G)-b_1(G)$ can be 
proven as in \cite{Eckmann1944} or by using the heat flow given by 
the $(n+m) \times (n+m)$ matrix
$H=K_0 \oplus K_1$ using that $\chi(G)={\rm str}(e^{-tH})$ for all 
$t$ (it follows from ${\rm str}(H^k)=0$ for $k \geq 1$)
\cite{McKeanSinger,knillmckeansinger} and using that for $t=0$, 
${\rm str}(e^{-0 H})={\rm str}(1)=n-m=\chi(G)$ and that
for $t \to \infty$, the matrix $P=e^{-tH}$ is a projection 
onto harmonic forms and $\lim_{t \to \infty} {\rm str}(e^{-t H})=b_0(G)-b_1(G)$. 
This heat flow argument for Euler-Poincar\'e works in general for any delta set, 
also in dimension larger than $1$. 

\paragraph{}
There are several reasons why quivers are of interest in mathematics. 
The first is historical: the original {\bf K\"oenigsberg problem graph} \cite{Euler1736}
was a multi-graph (see e.g. \cite{Chartrand1984,Pappas1993,BiggsLloydWilson,Brualdi2004}).
In physics, quivers occur in the form of 
{\bf Feynman diagrams} or {\bf spin networks}, where the spin numbers attached
to the edges can be modeled as multiple connections. 
A {\bf category} in mathematics defines (after forgetting the associative
composition structure) a quiver, where the nodes are objects and the 
edges are {\bf morphisms} and the loops are the {\bf endomorphisms}. 
In chemical graph theory, {\bf molecules} can be
modeled as quivers; one can use loops to deal with weighted vertices and 
multiple edges can model stronger bonds like for example in $CO_2$, 
where oxygen $O$ has double bonds to carbon $C$ or a thiophene molecule
\cite{TrinajsticChemicalGraphTheory}.
Also {\bf Baker-Norine theory} \cite{BakerNorine2007} for {\bf discrete Riemann-Roch} 
considers multi-graphs; {\bf divisor} are in that frame work integers attached 
to a vertex which at least for {\bf essential divisors} can be modeled 
as loops. Finally, any {\bf magnetic Schr\"odinger operator} on a quiver with 
non-negative real magnetic potential and non-negative scalar potential 
satisfies the bounds of Theorem~1 because these operators 
are approximated by scaled versions of Kirchhoff matrices of quivers. 
The simplest magnetic Schr\"odinger operator is a {\bf periodic Jacobi matrix}
$(L u)_k = -a_k u_{k+1} -a_{k-1} u_{k-1} + (a_k+a_{k-1}+d_k) u_k$ 
which for integer $a_k=a_{k+n} \geq 0,d_k=d_{k+n} \geq 0$  is
the Kirchhoff matrix of a quiver 
for which the underlying finite simple graph is the cyclic graph $C_n$. 
The number $a_k$ on the edge $(k,k+1)$ counts the multiplicity of the
edge forming an ``edge amplitude". The $d_k$ loops together with the
$a_k + a_{k-1}$ connections give a ``vertex amplitude". 

\paragraph{}
Already the Corollary $\lambda_k \leq 2d_k$ of Theorem~(\ref{1}) is stronger 
than what the {\bf Gershgorin circle theorem} 
\cite{Gershgorin,GershgorinAndHisCircles}
gives in this case: the circle theorem assures in the self-adjoint case
that in every interval $[0,2d_k]$, there is at least one eigenvalue 
$\lambda_l$ of $K$. But it does not need to 
be the k'th one. In the Kirchhoff case, the Gershgorin circles are nested. For
finite simple graphs, $0$ is always in the spectrum. Theorem~(\ref{1}) gives more information. 
The spectral data $\lambda_1=0,\lambda_2=10,\lambda_3=10$ for example would be
Gershgorin compatible to $d_1=1, d_2=3, d_3=7$ because there is 
an eigenvalue in each closed ball $[0,2], [0,6]$ and $[0,14]$. 
But these spectral data do not occur in any quiver and so not in a 
finite simple graph, as they would contradict Theorem~(\ref{1}) which 
is incompatible with $\lambda_2 = 2 d_2+4$. 
Theorem~\ref{1} keeps the eigenvalues more aligned with the degree sequence, 
similarly as the {\bf Schur-Horn theorem} does.

\paragraph{}
An application for connected finite simple graphs (which have one eigenvalue $\lambda_1=0$)
is that the {\bf pseudo determinant} ${\rm Det}(K)= \prod_{k=2}^n \lambda_k$, 
which by the Kirchhoff-Cayley {\bf matrix tree theorem} counts
the number of {\bf rooted spanning trees}, has an upper bound $2^n \prod_{k=1}^n d_k$
and that ${\rm det}(1+K)$ which by the Chebotarev-Shamis {\bf matrix forest theorem}
counts the number of {\bf rooted spanning forests}, has an upper bound 
$\prod_{k=1}^n (1+2d_k)$.  
These determinant inequalities do not follow from Gershgorin, nor from the 
Schur-Horn inequalities. 

\paragraph{}
We like to rephrase the determinant
inequalities as bounds on the {\bf spectral potential}
$U(z)=(1/n) \log \det(K-z) \leq 2 + (1/n) \log\det(M-z)$,
where $M={\rm Diag}(d_1,\dots,d_n)$ is the diagonal matrix and 
$z \leq 0$ is real. We made use of Theorem~(\ref{1}) to show that 
the potential $U(z)$ has a Barycentric limit. The Barycentric refinement
$G_1$ of a finite simple graph $G$ takes the complete subgraphs as vertices of a new 
graph $G_1$ and connects two if one is contained in the other. The spectrum of
any locally defined Laplacian like the 0-form Laplacian $K_0$ has then a Barycentric limit in the sense
that the density of states measure $dk$ converges weakly to a measure that only depends
on the dimension. In one dimension, where the limiting $dk$ has compact support,
it is the absolutely continuous equilibrium measure on $[0,4]$ but in higher dimensions
it appears to be more complicated and has unbounded (probably fractal) support.
Our estimate relates the potential $U$ of the interacting system with the potential 
$(1/n) \log\det(M-z)$ of the {\bf non-interacting system}, where $M$ can be thought 
of as the Kirchhoff matrix of a ``non-interacting" system, where we have $d_k$ 
self-loops at vertices $k$, even-so the tree 
and forest interpretations do not apply for diagonal matrices.

\section{Quivers}

\paragraph{}
Quivers extend multi-graphs in that additionally to multiple connections,
also self-loops are allowed. For us here, the direction of the edges is not important. 
Usually, all edges come with a direction. But this is just a {\bf choice of basis}. 
It is irrelevant to the spectral discussion because orientations do not 
enter the Kirchhoff matrix $K=F^TF$, which is the $0$-form Laplacian. 
The orientations do enter the $1$-form Laplacian $F F^T$ because $1$-forms live 
on the edges. Different orientations just produce similar and so 
isospectral 1-form Laplacians.

\paragraph{}
Extending Theorem~1 and Theorem~2 from finite simple graphs to quivers 
simplified the proof because the class of Kirchhoff matrices of quivers is 
invariant under the process of taking principal sub-matrices. It is not uncommon
in induction proofs that more general proofs are easier because the induction assumption
becomes stronger too. 
The induction step would already fail on the class of {\bf multi graphs}
which are graphs with possible multiple connections but which have no loops.
The notations in the literature are not always the same so that 
we will now repeat the definition of quiver we use. 
\footnote{Quiver-graphs have been called ``graphs" \cite{BR},``general graphs" \cite{Brualdi2004},
``pseudo graphs" \cite{Vasudev}, ``loop multi-graphs" \cite{TrinajsticChemicalGraphTheory} 
or ``r\'eseaux" \cite{VerdiereGraphSpectra}.}

\paragraph{}
A {\bf quiver} $G=(V,E)$ is defined by a finite set of vertices $V=\{1,\dots, n\}$
and a finite list of edges $E=\{ (v_j,w_j) \in V \times V, j=1,\dots,m \}$, 
where no restrictions are imposed on the list $E \subset V \times V$. 
The edges are given by ordered lists $(a,b)$ providing an {\bf orientation}. As was pointed out
already, this is a choice of basis when writing down the gradient matrix $F$, but 
it is irrelevant for spectral purposes. 
Several copies of the same edge can appear. They are called {\bf multiple connections}.
Additionally, {\bf self-loops} $(v,v)$ are allowed. The data of an undirected 
quiver can also be given by a {\bf finite simple graph} $H=(V,E)$ and two functions 
$X: V \to \mathbb{N}$ and $Y: E \to \mathbb{N}$ telling how many loops there are at 
each node and how many additional connections each edge 
in $F$ has. \footnote{One would call $X$ an ``effective divisor" in the discrete 
Riemann-Roch terminology \cite{BakerNorine2007} because they are non-negative.
Self-loops are treated here as edges. They do not contribute to the 
{\bf adjacency matrix} $A$ but to the {\bf degree matrix} $B$ in $K=B-A$. 
There are $|V|=n$ vertices and $|E|=m$ edges and the later include the loops.} 
If the edges connecting different nodes are oriented, one can also indicate this
by talking about a {\bf directed multi-graph} or {\bf directed quiver}. 
We use (an arbitrary) orientation on edges only for defining
the {\bf gradient matrix} $F$. As for graphs, this orientation does not affect the 
Kirchhoff matrix $K=F^T F$ (the 0-form Laplacian) nor the spectrum of the matrix 
$K_1=F F^T$ (the 1-form Laplacian).

\paragraph{}
The {\bf adjacency matrix} $A$ of $G$ is the symmetric $n \times n$ matrix with 
$A_{ij}=| \{(v,w) \in V^2,  v \neq w, (v,w) \in E \; {\rm rsp} \; (w,v) \in E \} |$,
the number of edges hitting $v$. Note that loops do not enter the adjacency matrix.
The {\bf vertex degree matrix} is defined as a diagonal matrix with 
$B_{ii} = | \{ (v,w) \in V^2, v = v_i \; {\rm or} \; w = v_i \} |$ counting the number of
edges that entering or leaving. But each loop just counts as $1$ in a diagonal entry of $B$. 
The {\bf Kirchhoff matrix} of the $G$ is then defined as $K=B-A$. 
For the 4-loop clover graph (see Example~2), we have $A=[0],B=[4]$ so that 
$K=K_0=B-A=[4]-[0]=[4]$. 
For $G=\{\{1\},\{\}\}$, one has $B=[0]$ and $K=B-A=[0]-[0]=[0]$ and the 1-form Laplacian 
is the {\bf empty matrix}, as $G$ is 0-dimensional. For the {\bf Kronecker quiver}
$G=\{ \{1,2\},\{ (1,1),(2,2),(1,2),(1,2)\} \}$ with 2 vertices and 4 edges, we have 
$K=\left[ \begin{array}{cc} 3 & -2 \\ -2 & 3 \\ \end{array} \right]$ with eigenvalues $1,5$. 
The Kirchhoff matrix appears in \cite{BakerNorine2007} for multi-graphs, 
meaning quivers without self-loops.
%KirchhoffMatrix[Graph[{1->1,1<->1,1<->2,1<->2,2<->2}]]==KirchhoffMatrix[Graph[{1<->2}]]

\paragraph{}
As in the case of finite simple graphs, there is also for quivers 
a factorization $K= F^T F$ of the Kirchhoff matrix $K$. 
The $m \times n$ matrix $F$ is the {\bf discrete gradient}, mapping a function on vertices
(0-form) to a function on edges (1-form) and $F^T$ is the 
{\bf discrete divergence}, mapping functions on edges to functions on vertices. 
\footnote{We usually would write $d_0$ instead of $F$ for the gradient and $d_0^*$ 
instead of $F^T$ for the divergence as in analysis, but $d_k$ are used for vertex 
degrees here.}
We have $F_{(a,b),v} = 1$ if $v=a=b$; if $a \neq b$, then 
$F_{(a,b),b} = 1$ and $F_{(a,b),a}=-1$ and $F_{(a,b),v}=0$ if $v \notin \{a,b\}$. 
The matrix $K=K_0$ is the {\bf 0-form Laplacian} while $K_1=F F^T$ is the 
{\bf 1-form Laplacian}. One can build a $(n+m \times n+m)$ Dirac matrix $D=d+d^*$ with 
$d^2=0$ and form $H=K_0 \oplus K_1=D^2=(d+d)^* = d d^* + d^* d$. The fact that $K_0,K_1$
are essentially isospectral is then a special case of the McKean-Singer symmetry. 
\footnote{ Henry McKean, one of the authors of \cite{McKeanSinger}, 
is acknowledged in Anderson-Morely \cite{AndersonMorley1985} as suggesting the study. }

\paragraph{}
{\bf Example 1:} If $V=\{1,2,3\}$ and 
$E=\{ (1,1),(1,1),(1,2),(1,2),(1,2),(2,1),(2,2),(2,3) \}$, 
\footnote{To illustrate the role orientation, we added an $(2,1)$ edge 
with a different orientation than $(1,2)$. 
This is a choice of basis or a choice of gauge. It affects gradient $F$, 
divergence $F^T$ and 1-form Laplacian $K_1=F F^T$ and harmonic 
1-forms (vectors in the kernel of $K_1$). It does not affect the 0-form 
Laplacian $K=FF^T$. }
$$ K = \left[ \begin{array}{ccc}
                  6 & -4 & 0  \\
                  -4 & 6 & -1 \\
                  0 & -1 & 1  \\
                 \end{array} \right]
 = F^T F = \left[
\begin{array}{cccccccc}
 1 & 1 & 1 & 1 & 1 & -1 & 0 & 0   \\
 0 & 0 & -1 & -1 & -1 & 1 & 1 & 1 \\
 0 & 0 & 0 & 0 & 0 & 0 & 0 & -1   \\
\end{array} \right] 
\left[ \begin{array}{ccc}
 1 & 0 & 0  \\
 1 & 0 & 0  \\
 1 & -1 & 0 \\
 1 & -1 & 0 \\
 1 & -1 & 0 \\
 -1 & 1 & 0 \\
 0 & 1 & 0  \\
 0 & 1 & -1 \\
\end{array} \right] \; . $$
While $F, F^T$ and $K_1=F F^T$ depend on the given orientation of the edges, 
the Kirchhoff matrix $K=F^T F$ does not. Every diagonal entry $K_{i,i}$ is 
a {\bf vertex degree}, counting the number of edges leaving or entering the node $v_i$, 
where a loop counts as one edge. The $d_i$ provide an {\bf ordered
list} of these diagonal entries. We could renumber the vertices so that $d_i=K_{ii}$ 
but this is not necessary. The entry $-K_{i,j}$ for $i \neq j$ counts the number of 
connections from the vertex $v_i$ to the vertex $v_j$. 

\paragraph{}
{\bf Example 2:} The {\bf clover graph} is $G=(V,E)=(\{\{1\},\{(1,1),(1,1),(1,1),(1,1)\})$ has
one vertex and 4 loops. Now $F^T=[1,1,1,1]$ and $K=K_0=F^T F$ is the $1 \times 1$ 
matrix $[4]$  and the 1-form Laplacian is
$K_1=F F^T=\left[ \begin{array}{cccc}1&1&1&1 \\1&1&1&1 \\1&1&1&1 \\1&1&1&1 \end{array} \right]$
is a $4 \times 4$ matrix. 
Theorems~1 and 2 are both sharp in this example because $d_1=4, \lambda_1=4, n=1$.
This happens for any $m$-loop graph with $n=1$ vertex. 
The {\bf Betti numbers} $b_0={\rm dim}({\rm ker}(K_0))=0$
and $b_1={\rm dim}({\rm ker}(K_1))=n-1$ satisfy the {\bf Euler-Poincar\'e relations}
$\chi(G)=b_0-b_1=|V|-|E|$ for the {\bf Euler characteristic} as a $1$-dimensional 
cell complex. It is already not a simplicial complex but a {\bf delta set}. 
Note that $K$ is invertible, leading to $b_0=0$. 
The {\bf one loop graph} $G=(V,E)=(\{\{1\},\{(1,1)\})$ has trivial cohomology 
$b_0=b_1=0$ and $|V|=|E|=1$. 
% F = QuiverGradient[Graph[{1->1, 1->1, 1->1, 1->1}]]; K1=F.Transpose[F]; K0=Transpose[F].F

\begin{figure}[!htpb]
\scalebox{0.5}{\includegraphics{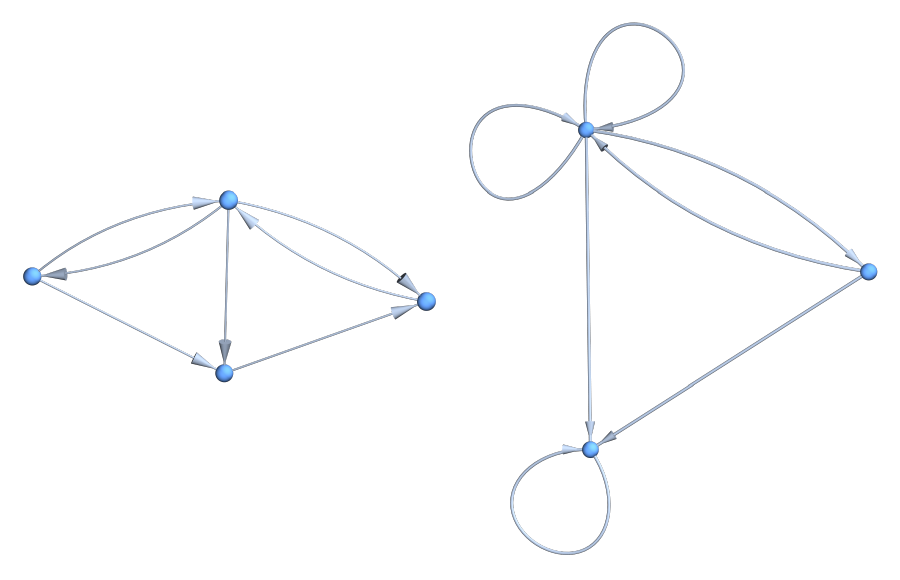}}
\label{Koenigsberg}
\caption{
The {\bf K\"onigsberg bridge graph} $G$ considered by Leonhard Euler 
\cite{Euler1736} in 1736 is a multi-graph of Euler characteristic $|V|-|E|=4-6=-2$
(when seen as a 1-dimensional cell complex).
Removing the first row and column of the Kirchhoff matrix
is the Kirchhoff matrix of a quiver with $3$ vertices.
}
\end{figure}

\paragraph{}
{\bf Example 3:} The {\bf K\"onigsberg bridge graph}
$$ G = (V,E) = ( \{ 1,2,3,4 \}, \{ (12),(21),(41),(14),(23),(34),(13) \}) $$
is historically the earliest appearance of a multi-graph in mathematics. 
The gradient matrix is $F=\left[
\begin{array}{cccc}
 1 & -1 & 0 & 0 \\
 -1 & 1 & 0 & 0 \\
 -1 & 0 & 0 & 1 \\
 1 & 0 & 0 & -1 \\
 0 & 1 & -1 & 0 \\
 0 & 0 & 1 & -1 \\
 1 & 0 & -1 & 0 \\
\end{array} \right]$. We have chosen a variety of orientations to illustrate
how it affects the entries. The Kirchhoff matrix is 
$K=F^T F=\left[ \begin{array}{cccc}
 5 & -2 & -1 & -2 \\
 -2 & 3 & -1 & 0 \\
 -1 & -1 & 3 & -1 \\
 -2 & 0 & -1 & 3 \\
\end{array} \right]$. We have 
$d_1=d_2=d_3=3,d_4=5$ and $\lambda_1=0,\lambda_2=3,\lambda_3=4,
\lambda_5=7$. The principal sub-matrix in which the first row and
column are deleted, has the eigenvalues $3-\sqrt{2},3,3+\sqrt{2}$.
The Euler characteristic is $\chi(G)=|V|-|E|=4-7=-3$. The spectrum 
of the 1-form Laplacian
$K_1 = F F^T = \left[ \begin{array}{ccccccc}
 2 & -2 & -1 & 1 & -1 & 0 & 1 \\
 -2 & 2 & 1 & -1 & 1 & 0 & -1 \\
 -1 & 1 & 2 & -2 & 0 & -1 & -1 \\
 1 & -1 & -2 & 2 & 0 & 1 & 1 \\
 -1 & 1 & 0 & 0 & 2 & -1 & 1 \\
 0 & 0 & -1 & 1 & -1 & 2 & -1 \\
 1 & -1 & -1 & 1 & 1 & -1 & 2 \\
\end{array} \right]$ is $(0,0,0,0,3,4,7)$. The cohomological 
Euler characteristic is 
$b_0-b_1={\rm dim}({\rm ker}(L_0))-{\rm dim}({\rm ker}(L_1)) =1-4=-3$. 

\paragraph{}
{\bf Example 4:} The {\bf Good-Will-Hunting multi-graph}  \cite{MathMovies,HorvathKorandiSzabo}
$$  G=(\{ \{1,2,3,4 \}, \{ (12),(24),(14),(23),(23) \}) $$
was seen as a challenge on a chalk board in the 1997 coming-of-age 
movie ``Good Will Hunting". Its Euler characteristic as a 1-dimensional cell complex
is $\chi(G)=|V|-|E|=-1$. We have 
$F=\left[
\begin{array}{cccc}
 1 & -1 & 0 & 0 \\
 0 & 1 & 0 & -1 \\
 1 & 0 & 0 & -1 \\
 0 & 1 & -1 & 0 \\
 0 & 1 & -1 & 0 \\
\end{array}
\right]$ and
$K=F^T F= B-A = \left[ \begin{array}{cccc}
 2 & -1 & 0 & -1 \\
 -1 & 4 & -2 & -1 \\
 0 & -2 & 2 & 0 \\
 -1 & -1 & 0 & 2 \\
\end{array} \right]$. In the movie, the first competition assignment written on the 
MIT chalk-board in the Good-Will-Hunting problem was the task to
write down the adjacency matrix $A$. We have $d_1=d_2=d_3=2, d_4=4$ and 
$\lambda_1=0,\lambda_2=(7-\sqrt{17})/2,\lambda_3=3,\lambda_4=(7+\sqrt{17})/2$. 
The principal sub-matrix in which the second row and column were deleted, has eigenvalues
$1,2,3$. It is the Kirchhoff matrix of the quiver 
$G=(\{ \{1,3,4\}, \{ (14),(11),(44),(33),(33) \})$. The $1$-form Laplacian is
a $5 \times 5$ matrix with a 2-dimensional kernel. A basis for these {\bf harmonic 1-forms}
are supported on a loop of length 3 and a loop of length 2. Since $b_0=1,b_1=2$, 
we again get $\chi(G)=1-2=-1$ for the Euler characteristic. 

\paragraph{}
{\bf Example 5:} The {\bf Kronecker quiver} $Q=\{ \{1,2\},\{ (1,1),(2,2),(1,2),(1,2) \} \}$
has the gradient 
$F = \left[ \begin{array}{cc}
                   1 & 0 \\
                   0 & 1 \\
                   1 & -1 \\
                   1 & -1 \\
                  \end{array} \right]$ and 
$K=F^T F=\left[ \begin{array}{cc} 3 & -2 \\ -2 & 3 \\ \end{array} \right]$ with eigenvalues $1,5$
and $K_1=F F^T=\left[
                  \begin{array}{cccc}
                   1 & 0 & 1 & 1 \\
                   0 & 1 & -1 & -1 \\
                   1 & -1 & 2 & 2 \\
                   1 & -1 & 2 & 2 \\
                  \end{array}
                  \right]$ and spectrum $\{0,0,1,5\}$. The Euler characteristic is $\chi(Q)=|V|-|E|=2-4$
or $\chi(Q)=b_0-b_1=0-2$. If the edges are interpreted as morphisms or endomorphisms and the
associative composition law is added, $Q$ is a {\bf category}. A general quiver is a functor $Q \to {\rm Set}$
so that the {\bf quiver category} is a category of {\bf presheaves} on the opposite category $Q^{op}
= Q=\{ \{1,2\},\{ (1,1),(2,2),(2,1),(2,1) \} \}$. Quivers are delta sets of maximal dimension $1$. 

\begin{figure}[!htpb]
\scalebox{0.9}{\includegraphics{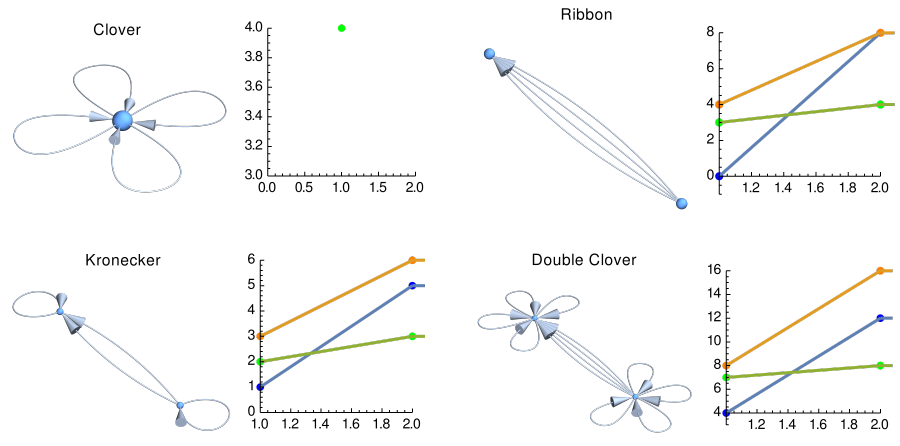}}
\label{CloverKroneckerRibbon}
\caption{
For the clover, with $K=[m]$, Theorems~1,2
are sharp. For graphs with multiple connections like the ribbon, the lower
bound Theorem~2 can fail. 
If in the Kronecker quiver $Q=\{ V=\{V,E\}, E=\{ (V,V),(E,E),(V,E),(V,E)\} \}$,
edges are interpreted as morphisms, (loops are endomorphisms), and a composition is
added, one has a finite category with 2 objects. A quiver can be
seen as a functor from $Q$ to the category of finite sets. Seen as such, 
quivers are a {\bf finite topos}. 
}
\end{figure}

\begin{figure}[!htpb]
\scalebox{0.5}{\includegraphics{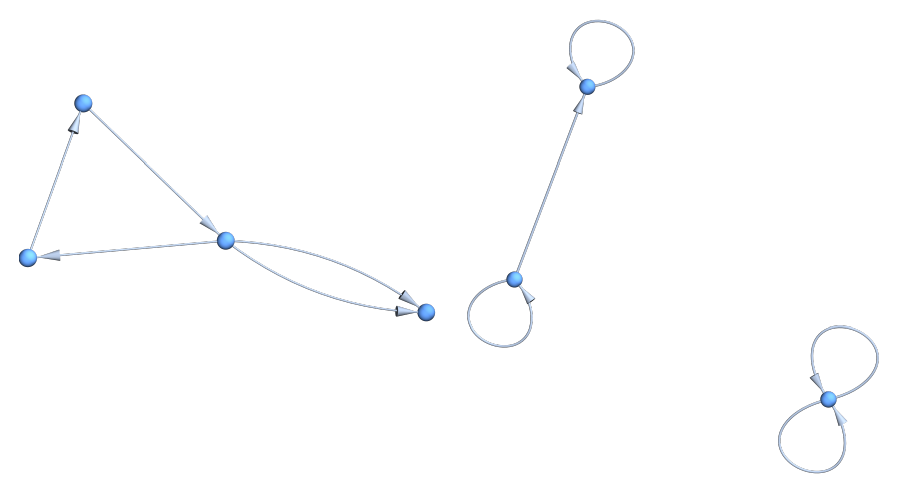}}
\label{Spectra}
\caption{
The picture shows the {\bf Good-Will-Hunting quiver} and the principal 
sub-quiver obtained by deleting the vertex $v$ with maximal degree, 
snapping all connections $(v,w)$ to $v$ to loops $(w,w)$
on its neighbors $w$. 
}
\end{figure}

\section{Proofs}

\paragraph{}
Induction is enabled by the following observation.
\footnote{An earlier version of this paper formulated Theorem~1 for finite simple graphs only
and artificially introduced a class $\mathcal{K}$ of matrices, 
that is invariant under principal sub-matrix operation. 
It is contained in the class of Kirchhoff matrices of quivers.}

\begin{lemma} Any principal sub-matrix of a Kirchhoff matrix of quiver
with $2$ or more vertices is the Kirchhoff matrix of a quiver. \end{lemma}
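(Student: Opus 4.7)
The plan is to exhibit, for each principal submatrix $K'$ obtained by deleting the row and column of a vertex $v_i$, an explicit quiver $G'$ on $V \setminus \{v_i\}$ whose Kirchhoff matrix equals $K'$. The natural guess, motivated by the Good-Will-Hunting example already in the paper where deleted edges ``snap to loops on the neighbors'', is the following construction: take all edges of $G$ that do not touch $v_i$; discard every self-loop at $v_i$; and for each edge between $v_i$ and some other vertex $v_j$ (counted with multiplicity), add one self-loop at $v_j$.

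The main check is then a bookkeeping verification that the Kirchhoff matrix of $G'$ matches $K'$ entry by entry. For the off-diagonal entries with $j,k \neq i$, $j \neq k$, nothing involving $v_i$ contributes, and the added loops are not adjacencies between distinct vertices, so the multiplicity of edges between $v_j$ and $v_k$ in $G'$ equals that in $G$; thus $-K'_{jk}$ agrees with $K_{jk}$. For the diagonal entry at $v_j$, the degree in $G'$ equals the number of old edges from $v_j$ to vertices in $V \setminus \{v_i\}$ (loops at $v_j$ counted once each) plus the number of new self-loops at $v_j$, which by construction is the number of old edges between $v_j$ and $v_i$. Summing, this recovers exactly the original degree $K_{jj}$, as required.

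The only step that needs emphasis, rather than calculation, is why the construction lives in the stated class at all: the operation introduces self-loops, so quivers are needed, which is precisely why the lemma is stated for quivers rather than for multi-graphs. No calibration on loop-weights is necessary because the paper's convention counts each loop as one edge in $B$ and contributes nothing to $A$, which matches the $+1$ that each deleted edge $(v_i,v_j)$ left behind on $K_{jj}$. Hence the construction is tight and no ambiguity remains.

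I do not expect a genuine obstacle here; the only subtlety is making sure the loop convention used in the construction matches the definition $B_{ii}=|\{(v,w)\in E : v=v_i \text{ or } w=v_i\}|$ given in the quiver section, which is indeed the case. The induction-enabling role of the lemma (flagged in the paragraph preceding it) then motivates deferring more substantial work to the proofs of Theorems~\ref{1} and~\ref{2}, where this closure property is applied.
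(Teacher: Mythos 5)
Your construction --- delete the vertex, drop its loops, and convert each edge from $v_i$ to $v_j$ into a loop at $v_j$ --- is exactly the ``snapping connections to loops'' operation the paper uses (it is spelled out in the proof of the subsequent lemma on quivers without multiple connections and illustrated in the Good-Will-Hunting example), and your entrywise verification is correct under the paper's convention that a loop adds $1$ to the degree and nothing to the adjacency matrix. The proposal is correct and takes essentially the same approach as the paper.
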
 
\begin{proof}
Removing the row and column to a vertex $v$ has the effect that we look at the
quiver with vertex set $V \setminus \{v\}$ for which $k$ additional loops
have been added at every entry $w \in V \setminus \{v\}$ which had $k$ multiple
connections to $v$ in the original graph $G$. We could also allow $n=1$ (in which
case we have the $1 \times1$ matrix $K=[l]$ for a quiver with $l$ loops),
but then the principal sub-matrix is the {\bf empty matrix} which can be thought of
as the Kirchhoff matrix of the {\bf empty graph} $\{ \emptyset, \emptyset \}$ 
which is the initial object in the category. 
\end{proof}

\paragraph{}
For the lower bound, we will need the following observation:

\begin{lemma} Any principal sub-matrix of a Kirchhoff matrix of quiver without 
multiple connections with $2$ or more vertices is the Kirchhoff matrix of a quiver without 
multiple connections. \end{lemma}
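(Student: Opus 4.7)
The plan is to build directly on Lemma~3 (the analogous statement without the ``no multiple connections'' qualifier), which already delivers that the principal submatrix $K'$ of the Kirchhoff matrix $K$ of the quiver $G$ is the Kirchhoff matrix of \emph{some} quiver $G'$. The only remaining task is to verify that multiple connections between distinct vertices cannot be created by the sub-matrix operation.

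By iterating, it suffices to treat the removal of a single vertex $v$. For any two distinct vertices $i,j$ in the remaining vertex set, the off-diagonal entry $K'_{ij}$ of the principal submatrix equals $K_{ij}$, hence $-K'_{ij}=-K_{ij}=A_{ij}$. Since $G$ has no multiple connections, $A_{ij}\in\{0,1\}$, so the number of edges between $i$ and $j$ in $G'$ is at most $1$. Thus no multiple connections are introduced.

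To complete the picture and make the construction explicit, I would exhibit a quiver $G'$ realizing $K'$: take $G$, delete the vertex $v$ together with every edge incident to $v$, and for every remaining vertex $i$ attach $A_{iv}$ self-loops at $i$. The off-diagonal entries of the Kirchhoff matrix between the remaining vertices are inherited from $G$, so they agree with $K'$; and the added loops raise the vertex degree of $i$ by exactly the amount lost by deleting the $A_{iv}$ edges from $i$ to $v$, keeping the diagonal entry equal to $K_{ii}=K'_{ii}$. Since $A_{iv}\in\{0,1\}$, at most one loop is added per remaining vertex, and loops do not count as multiple connections; meanwhile the pair-multiplicities between distinct remaining vertices are unchanged.

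I do not foresee a real obstacle: the content of the lemma is purely bookkeeping on the entries of $K$, once Lemma~3 is granted. The only conceptual point worth emphasizing in the write-up is that ``multiple connections'' refers strictly to parallel edges between \emph{distinct} vertices, so the loops we add while building $G'$ are harmless.
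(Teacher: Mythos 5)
Your proof is correct and follows essentially the same route as the paper: the paper's own argument is exactly the explicit construction you give in your third paragraph, namely deleting the vertex $v$ and converting each connection from a remaining vertex $w$ to $v$ into a loop at $w$, which leaves all off-diagonal entries between remaining vertices (and hence the absence of multiple connections) untouched. The only cosmetic difference is that you first invoke the general quiver version (Lemma~3) and then check the no-multiple-connections property separately, whereas the paper folds both into the single explicit description.
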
   
\begin{proof}
We can repeat the proof from the previous lemma and note that removing the row and column 
to a vertex $v$ does not introduce multiple connections and so keep us in the 
class of quivers without multiple connections. 
\end{proof} 

\paragraph{}
A consequence is that Kirchhoff matrices of quivers or Kirchhoff matrices of quivers without
multiple connections form classes of matrices which are invariant under the operation of 
taking principal sub-matrices. We should also note the obvious:

\begin{lemma}
The Kirchhoff matrix of any quiver has only non-negative eigenvalues.
\end{lemma}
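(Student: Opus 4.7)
The plan is to leverage the factorization $K = F^T F$ that has already been constructed in the excerpt, where $F$ is the $m \times n$ incidence gradient matrix of the quiver. Since this factorization is given explicitly for quivers (with an explicit recipe for $F_{(a,b),v}$, including the case of loops where $F_{(a,b),v}=1$ when $v=a=b$), we may simply take it as input and deduce positive semi-definiteness from the algebraic form $K = F^T F$ rather than arguing from the combinatorial structure.

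The key step is the standard observation that for any real matrix $F$ and any vector $x \in \mathbb{R}^n$, one has $x^T K x = x^T F^T F x = \|Fx\|_2^2 \geq 0$. Hence $K$ is a symmetric positive semi-definite matrix, which implies that every eigenvalue $\lambda$ of $K$ satisfies $\lambda \geq 0$. Since $K$ is symmetric, the spectrum is real, and combined with non-negativity we obtain the conclusion.

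I would state this as a two-line proof, noting only the factorization and the Rayleigh-quotient argument. No induction, no appeal to the preceding two lemmas about principal sub-matrices, and no case analysis of loops vs.\ multiple connections is needed; the gradient construction has already absorbed all such bookkeeping. There is essentially no obstacle here: the only thing to double-check is that the gradient $F$ defined in the excerpt does genuinely satisfy $F^T F = B - A$ when loops are present, and the worked $3 \times 3$ example given earlier confirms this. Thus the lemma is a direct corollary of the factorization already established, and it provides the baseline fact $\lambda_k \geq 0$ referenced in the discussion of Theorem~\ref{2}.
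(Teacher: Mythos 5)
Your proposal is correct and is essentially the paper's own proof: both rest on the factorization $K=F^TF$ and the observation that $(Kv,v)=(Fv,Fv)\geq 0$, applied either to an arbitrary vector (your positive semi-definiteness phrasing) or directly to a unit eigenvector (the paper's phrasing). No meaningful difference in approach.
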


\begin{proof}
This is a general linear algebra result: use 
$K = F^T F$ and write $(v,w)$ for the dot product of two vectors $v,w$. 
If $K v = \lambda v$ with unit eigenvector $v$, we have 
$\lambda = \lambda (v,v) = (\lambda v,v) = (K v,v) = (F^T F v,v) = (Fv,Fv) \geq 0$. 
\end{proof} 

\paragraph{}
The proof of Theorem~1 and Theorem~2 both use induction with respect to the number 
$n$ of vertices. The induction step uses the {\bf Cauchy interlace theorem} 
(also known as {\bf separation theorem}) \cite{HornJohnson2012}:
if $\mu_k$ are the eigenvalues of a principal $(n-1 \times n-1)$ sub-matrix of $K$, 
then $\lambda_k \leq \mu_k \leq \lambda_{k+1}$.
(The Cauchy interlace theorem follows from the {\bf Hermite-Kakeya theorem} 
for real polynomials \cite{Hwang2004,Fisk2005,RahmanSchmeisser}. 
If $f$ is a monic polynomial of degree $n$ with real roots
$\lambda_1 \leq \cdots \leq \lambda_n$
and $g$ is a monic polynomial of degree $n-1$ with real roots
$\mu_1 \leq \cdots \leq \mu_{n-1}$ then
{\bf $g$ interlaces $f$} if
$\lambda_1 \leq \mu_1 \leq \lambda_2 \leq \cdots \leq \mu_{n-1} \leq \lambda_n$.)
For an other short proof of the interlace theorem, see 
\cite{Nica2018} (Theorem 10.6).

\paragraph{}
Here is the proof of Theorem~1: 

\begin{proof} 
The induction foundation $n=1$ works because the result holds for the
Kirchhoff matrix of a quiver with $n=1$ vertices and $l$ loops.
Indeed, this $1 \times 1$ matrix $K=[l]$ which has one non-negative entry $l$ and
$\lambda_1=l$ and $d_1=l$. \\
Assume now the claim is true for all quivers with $n$ vertices. 
Take a quiver with $n+1$ vertices and let $K$ denote its Kirchhoff matrix. 
Pick a row with maximal diagonal entry and delete this row and the corresponding 
column intersecting in that diagonal entry. Using the Lemma, it
produces a Kirchhoff matrix $L$ of a quiver $H$ with $n$ vertices.  By
Cauchy interlacing, the eigenvalues $\mu_k$ of $L$ satisfy 
$0 \leq \lambda_1 \leq \mu_1 \leq \cdots \leq \lambda_n \leq \mu_n \leq \lambda_{n+1}$. 
We know by induction that $\lambda_k \leq \mu_k \leq d_k + d_{k-1}$ for $1 \leq k \leq n$. 
So, also $\lambda_k \leq \mu_k$ for $k \leq n$. 
The interlace theorem does not catch yet the largest eigenvalue $\lambda_{n+1}$. 
This requires an upper bound for the {\bf spectral radius}. 
This is where Anderson and Morley \cite{AndersonMorley1985} come in.
They realized in 1985 that $K=F^T F$ is essentially iso-spectral to $K_1=F F^T$,
meaning that the non-zero eigenvalues agree. 
The later is the adjacency matrix of the {\bf line graph} but with the modification
that all diagonal entries are $2$ (for edges connecting two vertices) or $1$
(for loops). Since the row or column sum
of $F F^*$ is bound by $d_k(a) + d_k(b)$ for any edge $(a,b)$, the upper 
bound holds. The Anderson-Morley bound for quivers adds the last inequality
$\lambda_{n+1} \leq d_{n+1} + d_n$ for all quivers with $n+1$ vertices. 
The induction proof is complete. 
\end{proof} 

\paragraph{}
Here is the proof of Theorem~2: 

\begin{proof} 
The induction foundation $n=1$ holds because every quiver without multiple 
connections satisfies $d_1 \leq n-1$. This implies $0=\lambda_1 \geq d_1-(n-1)$.
Assume that the claim is true for all quivers with $n$ vertices. 
Take a quiver with $n+1$ vertices and let $K$ denote its Kirchhoff matrix. 
Pick the row with the minimal diagonal entry and delete this row and the corresponding 
column. This principal sub-matrix is a Kirchhoff matrix $L$ of a quiver $H$ 
with $n$ vertices. By Cauchy interlacing and Lemma~2, the eigenvalues $\mu_k$ of $L$ satisfy 
$d_1-((n+1)-1) \leq \lambda_1 \leq \mu_1 \leq \cdots \leq \lambda_n \leq \mu_n \leq \lambda_{n+1}$. 
From this and the induction assumption, we conclude
$\lambda_k \geq \mu_{k-1} \geq d_{H,k-1} - (n_H-(k-1))$ 
for $k \geq 2$. But $d_{H,k-1}=d_{G,k}$ and $n_H=d_G-1$ so that
$\lambda_{k} \geq d_{H,k-1} - (n_H-(k-1)) = d_{G,k} - (n_G-1-(k-1)) = d_{G,k}-(n_G-k)$
which needed to be shown. 
\end{proof} 

\paragraph{}
Here is the proof of Lemma~1:

\begin{proof}
The proof is the same as in the Anderson-Morley case, because we still
can relate the spectrum of $K=F^T F$ with the spectrum of $K_1 = F F^T$.
The later is a $m \times m$ matrix which has $1$ in the diagonal if the edge had
been a loop and $2$ in the diagonal, where the edge connects two points. Furthermore,
$K_1(e,f)=-1$ or $1$. If we look at the $k$'th column belonging to an edge $e$,
then the sum of the absolute values of the column entries is bound by $d_n + d_{n-1}$. 
The spectral radius $K_1$ is therefore bound by $d_n + d_{n-1}$. Since $K$ and $K_1$
are essentially iso-spectral, meaning that they have the same non-zero eigenvalues, 
also the spectral radius of $K$ is bounded above by $d_n + d_{n-1}$. 
\end{proof} 

\paragraph{}
Here is the proof of Lemma~2: 

\begin{proof}
We use induction for fixed $n$ with respect to the number $l$ of self-loops at the lowest
degree vertex. The statement is true for graphs for $l=0$ loops (multi-graphs),
because there are then maximally $n-1$ neighboring vertices $d_1 \leq n-1$ so that 
$d_1 - (n-1) \leq 0$.
The general statement $\lambda_1 \geq 0$ proven above implies
now that if we have $l$ loops at the lowest degree vertex, then
$0 \leq \lambda_1 < d_1-(n-1)$. This means $d_1>n-1$ so that there are at 
least $q=d_1-(n-1)>0$ loops at this vertex. Because $d_k-(n-1) \geq d_1-(n-1)$,
there are also at least $q$ loops at every other vertex. Removing one loop at
every vertex lowers all eigenvalues by $1$ and all $d_k$ by $1$ and $l$ by $1$. 
We get a quiver with $l-1$ loops at the lowest vertex degree vertex, 
where still $d_1 - (n-1) >\lambda_1$. This is satisfied because the 
induction assumption was $d_1-(n-1) \leq \lambda_1$ for all quivers with 
$l-1$ or less loops at the vertex with the lowest degree.
\end{proof} 

\section{Remarks}

\paragraph{}
Quiver Kirchhoff matrices allow to consider the {\bf Schr\"odinger case} 
$L = K + V$, where $V$ is a diagonal 
matrix containing non-negative potential values $V_k$. More generally, we can use
{\bf electro-magnetic operators} 
$L u(v) = (K u)(v) + V(v) u(v) - \sum_{(v,w) \in E} W(w)$ with 
$W(w) \geq 0$ and $V(v) \geq 0$. 
If $V_k,W_k \in \mathbb{N}$, this is a situation which is covered by quivers as the
$V_k$ can be seen as loops and $W_k$ as a multiplicity of edges.
\footnote{\cite{VerdiereGraphSpectra} uses the term ``differential operator" or
``operator with magnetic field".}

\paragraph{}
While technically, we are bound to non-negative integer-valued potentials $V,W$
a scaling and translation of $L$ allows to cover rational-valued potentials. 
By a limiting procedure one can get to the real case. 
The upshot is that we can think about Kirchhoff Laplacians already as electromagnetic
Laplacians with electric field modeled by loops and magnetic field modeled by 
multiple connections. 

\begin{coro}[Schroedinger estimate]
If $L$ is a electro-magnetic operator, where $K$ is the Kirchhoff Laplacian of a quiver,
$V$ is a non-positive real electric field on vertices and $W$ a non-negative real 
magnetic field on edges, then $\lambda_k \leq d_k+d_{k-1}$, where $d_k$ are
the diagonal entries of $K$, ordered $d_1 \leq d_2 \leq \cdots \leq d_n$. 
\end{coro}

\paragraph{}
Note that in the electric case $L=K+V$, the spectrum monotonically increases when
increasing $V$ and that in the magnetic case $L=K-W$, the spectrum monotonically 
increases when increasing $W$. We can see the change of eigenvalues through deformation:
just track an eigenvalue $\lambda$ with eigenvector $v$
if a diagonal entry $V_k$ is varied in a {\bf rank-one perturbation}.
The {\bf first Hadamard deformation formula} gives $\lambda'(t) = v_k^2 \geq 0$.
The {\bf second Hadamard deformation formula} gets $\lambda''(t)>0$ depending on the
other eigenvalues illustrating {\bf eigenvalue repulsion}. Increasing $V$ 
increases all eigenvalues. If we increase the magnetic field $W$, which means by
our sign choice counts like reducing the number of bonds, the spectrum decreases.
If we decrease the electric field, which means reducing the number of loops, the
spectrum decreases again. This is the reason for the Corollary. 

\paragraph{}
Similarly than the Schur-Horn inequality
$\sum_{j=1}^k \lambda_j \leq \sum_{j=1}^k d_j$ 
which is true for any symmetric matrix with diagonal entries $d_j$, 
Theorem~(\ref{1}) controls how close the {\bf ordered eigenvalue sequence}
is to the {\bf ordered vertex degree sequence}. Theorem~(\ref{1}) has a different
nature than Schur-Horn:
for example, if the eigenvalues increase exponentially like $\lambda_k = 6^k$, then 
the inequality $\lambda_k \leq 2 d_k$ (which follows from Theorem~1, implies 
on a logarithmic scale $\log(\lambda_k) \leq \log(2) + \log(d_k)$. Schur-Horn does
not provide that. Also the {\bf Gershgorin circle theorem}
would only establish $\log(\lambda_k) \leq \log(2) + \log(d_n)$, 
where $d_n$ is the largest vertex degree entry, because that theorem assures only that in each 
Gershgorin circle, there is at least one eigenvalue.

\paragraph{}
Anderson and Morley \cite{AndersonMorley1985} already had the better bound 
$\max_{(x,y) \in E} d(x) + d(y)$ and Theorem~(\ref{1}) could be improved in that
$d_k+d_{k-1}$ could be replaced by ${\rm max}_{j, (x_j,x_k) \in E}  d_k+d_j$.  % UPDATE
Any general better upper bound of the spectral radius would lead to sharper results.
The Anderson-Morley estimate as an early use of a {\bf McKean-Singer super symmetry}
\cite{McKeanSinger} (see \cite{knillmckeansinger} in the discrete),
in a simple case, where one has only $0$-forms and $1$-forms. There, it reduces to
the statement that $K=F^* F$ is essentially isospectral to $F F^*$ which is true for 
all matrices. It uses that the Laplacian $K$ is of the form $F^* F = {\rm div} {\rm grad}$,
where $F={\rm grad}$ is the {\rm incidence matrix} $F f( (a,b) ) = f(b)-f(a)$ for functions
$f$ on vertices (0-forms) leading to functions on oriented edges (1-form). 

\paragraph{}
Much effort has gone into estimating the spectral radius of the Kirchhoff Laplacian. It is bounded above
by the spectral radius of the {\bf sign-less Kirchhoff Laplacian} $|K|$ in which one takes the
absolute values for each entry. This matrix is a non-negative matrix in the connected case
has a power $|K|^n$ with all positive entries so that by the Perron-Frobenius theorem, the
maximal eigenvalue is unique. (The Kirchhoff matrix itself of course can have multiple maximal 
eigenvalues like for the case of the complete graph). Also, unlike $K$ which is never 
invertible, $|K|$ can be invertible if $G$ is not bipartite. 
If we treat a graph as a one-dimensional simplicial complex 
(ignoring 2 and higher dimensional simplices in the graph), and denote by $d=F$ 
the exterior derivative of this skeleton complex, then 
$(d+d^*)^2 = K_0 + K_1$, where $K=K_0=d^* d$ is the Kirchhoff matrix and $K_1=d d^*$ is the 
one-form matrix with the same spectral radius, leading to 
\cite{AndersonMorley1985}. Much work has gone in improving this 
\cite{BrualdiHoffmann,Stanley1987,LiZhang1997,Zhang2004,FengLiZhang,Shi2007,LiShiuChan2010}.
We have used that an identity coming from connection matrices \cite{Hydrogen}.

\paragraph{}
We stumbled upon the theorem, when looking for bounds on the {\bf tail distribution} 
$\mu([x,\infty))$ of the {\bf limiting density of states} 
$\mu$ of the Barycentric limit $\lim_{n \to \infty} G_n$ of a 
finite simple graph $G=G_0$, where $G_n$ is the Barycentric 
refinement of $G_{n-1}$ in which the complete sub-graphs 
of $G_{n-1}$ are the vertices and two such vertices in $G_n$ are connected if 
one is contained in the other \cite{KnillBarycentric,KnillBarycentric2}. 
The {\bf potential} $U(z) = \int_0^{\infty} \log(z-w) \; d\mu(w)$ is interesting because
the special value $U(-1)$ measures the exponential growth of {\bf rooted spanning forests},
and the special potential value $U(0)$ 
captures the exponential growth of the {\bf rooted spanning trees}. See \cite{TreeForest}.
%  5/27/2022:  typo fix mix-up trees and forests

\paragraph{}
The spectral picture is that in general, the {\bf pseudo determinant}  \cite{cauchybinet}
${\rm Det}(K)=\prod_{\lambda \neq 0} \lambda$ 
is the number of {\bf rooted spanning trees} in $G$ by 
the {\bf Kirchhoff matrix tree theorem} and 
${\rm det}(1+K)$ is the number of rooted spanning forests in $G$
by the {\bf Chebotarev-Shamis matrix forest theorem} 
\cite{ChebotarevShamis1,ChebotarevShamis2,Knillforest}. All these relations follow directly
from the {\bf generalized Cauchy-Binet theorem} that states
that for any $n \times m$ matrices $F,G$, one has the pseudo determinant version
${\rm Det}(F^T G) = \sum_{|P|=k} \det(F_P) \det(G_P)$ with $k$ depends on $F,G$ and
$\det(1+x F^T G) = \sum_P x^{|P|} \det(F_P) \det(G_P)$.
{\bf Pythagorean identities} like ${\rm Det}(F^T F) = \sum_{|P|=k} \det^2(F_P)$
and $\det(1+F^T F) = \sum_P \det^2(F_P)$ follow for an arbitrary $n \times m$ matrix $F$.
Applied to the {\bf incidence matrix} $F$ of a connected finite simple graph, 
where $k(A)=n-1$ is the rank of $K=F^T F$, the first identity counts on the right hand side
the number of spanning trees and the second identity counts on the right hand side 
the number of spanning forests.  In the quiver case, we must think of ${\rm Det}(K)$ as
some soft of ``virtual tree count": take a {\bf clover graph} $G=\{\{1\},\{ (1,1),(1,1),(1,1)\} \}$,
a single node with $3$ loops, where $K=[3]$ has ${\rm Det}(K)={\rm det}(K)=3$ even so, 
technically, there is only one
``spanning tree", a tree without edges. In the {\bf ribbon multi-graph}
$G=\{\{1,2\},\{ (1,2),(1,2),(1,2)\} \}$, the spectrum of the Kirchhoff matrix is $\{0,6\}$
with  pseudo determinant ${\rm Det}(K)=6$ which 
counts the number of rooted spanning trees and the Fredholm determinant ${\rm det}(1+K)=7$
counts the number of rooted spanning forest. 
Tree and forest theorems work multi-graphs but the concept would need modification for 
quivers that have loops.

\paragraph{}
Having noticed that the {\bf tree-forest ratio} 
$\tau(G)={\rm Det}(1+K)/{\rm Det}(K)=\prod_{\lambda \neq 0} (1+1/\lambda)$ 
has a Barycentric limit $\lim_{n \to \infty} (1/|G_n|) \log(\tau(G_n))$,
we interpreted this as $U(-1)-U(0)$, but this required to see that 
the normalized potential $U(z)$ exists. This required spectral results.
By the way, for {\bf complete graphs} $G=K_n$, the {\bf tree-forest 
ratio} is $(1+1/n)^{n-1}$ and converges to 
the {\bf Euler number} $e$ for $n \to \infty$. For triangle-free
graphs (or considering the 1-dimensional skeleton complex on a graph only), 
$\log(\tau(G_n))/|V(G_n)|$ converges to $\log(\phi^2)$, where
$\phi=(1+\sqrt{5})/2$ is the {\bf golden ratio}.  
For example, for $G=C_n$, where ${\rm Det}(K)=n^2$ and the number of rooted spanning forests,
it is the alternate {\bf Lucas number}, recursively defined by 
$L(n+1) = 3 L(n)-L(n-1)+2, L(0)=0,L(1)=1$.  
We proved in general that $\log(\tau(G_n))/|V(G_n)|$ converges under 
{\bf Barycentric refinements} $G_0 \to G_1 \to G_2 \dots$
for an arbitrary graph $G=G_0$ (with Whitney simplicial complex) to a {\bf universal constant} that 
only depends on the maximal dimension of $G$ and is related to the universal
potential function $U(z)$ in dimension $d$. This is covered in \cite{TreeForest}.

\paragraph{}
Theorem~(\ref{1}) needs to be placed into the context of the spectral graph literature like 
\cite{Biggs,VerdiereGraphSpectra,Godsil,Brouwer,Chung97,Mieghem,DvetkovicDoobSachs,
Spielman2009,Nica2018} 
or articles like \cite{Guo2005,Stephen,GroneMerrisSunder2} as well as work in the higher
dimensional case \cite{DanijelaJost,DanijelaJost2}. 
Most of the research work in this area has focused on small
eigenvalues, like $\lambda_2$ or large 
eigenvalues, like the spectral radius $\lambda_n$. For $\lambda_2$, 
there is a {\bf Cheeger estimate} $h^2/(2d_n) \leq \lambda_2 \leq h$ 
for the eigenvalue $\lambda_2$ and {\bf Cheeger constant}
$h=h(G)$ \cite{Cheeger1970} first defined for Riemannian manifolds, 
meaning in the graph case that one needs remove $h |H|$ edges 
to separate a sub-graph $H$ from $G$ (see \cite{VerdiereGraphSpectra}).
% new reference. Verdier page 31
For the largest eigenvalue $\lambda_n$, the Anderson-Morley bound 
has produced an industry of results. 

\paragraph{}
Let us look at some example. Figure~(1) shows more visually
what happens in some examples of graphs with $n=10$ vertices. \\
a) For the {\bf cyclic graph} $C_4$, the Kirchhoff 
eigenvalues are $\lambda_1=0,\lambda_2=2,\lambda_3=2,\lambda_4=4$
and the edge degrees are $d_1=d_2=d_3=d_4=2$. \\
b) For the {\bf star graph} with $n-1$ spikes, the eigenvalues are 
$\lambda_1=0,\lambda_2= \cdots \lambda_{n-1}=1,
\lambda_n=n$ while the degree sequence is 
$d_1=\cdots = d_{n-1}=1, d_n=n$. \\
c) For a {\bf complete bipartite graph} $K_{n,m}$ with $m \leq n$, 
we have $0 \leq m \leq \cdots \leq m \leq n \cdots \leq  n$ with  % LEQ missing
$m-1$ eigenvalues $m$ and $n-1$ eigenvalues $n$. The degree sequence
has $m \leq m \leq \cdots \leq m \leq n \leq \cdots \leq n$ with 
$m$ entries $m$ and $n$ entries $n$. 
%s = CompleteGraph[{10, 4}]; l = Sort[Eigenvalues[KirchhoffMatrix[s]]];
%d = Sort[VertexDegree[s]]; d1 = Prepend[Delete[d, Length[d]], 0]; {l, d}

\paragraph{}
From all the $38$ isomorphism classes of connected finite simple graphs with $4$ vertices, 
there are only $3$, for which equality holds for some $k$ in Theorem~(\ref{1}).
From the $728$ isomorphism classes of connected finite simple graphs with $5$ vertices, 
there are none with equality for Theorem~(\ref{1}).
From the $26704$ isomorphism classes of connected graphs with $6$ vertices, 
there are $70$ for which Theorem~(\ref{1}) has equality for some $k$.
It is always only the largest eigenvalue, where we have seen equality to hold. 

\paragraph{}
The theorem implies $\sum_{j=1}^k \lambda_j \leq 2\sum_{j=1}^k d_j$ which is
weaker than the {\bf Schur-Horn inequality} $\sum_{j=1}^k \lambda_j \leq \sum_{j=1}^k d_j$
\cite{Schur1911}.
The Schur-Horn inequality is of wider interest. It can be seen in the context of partial traces 
\cite{TaoSchurHorn} $\sum_{j=1}^k \lambda_j = {\rm inf}_{dim(V)=k} {\rm tr}(A|V)$ and is
special case of the {\bf Atiyah-Guillemin-Sternberg convexity theorem} 
\cite{Atiyah1982,GuilleminSternberg1982}. The Schur inequality has been 
sharpened a bit for Kirchhoff matrices to 
$\sum_{j=1}^k \lambda_j \leq \sum_{j=1}^k d_j-1$ \cite{Brouwer} 
Proposition 3.10.1. 

\paragraph{}
Unlike the Schur-Horn inequality, Theorem~(\ref{1}) does not extend to general 
symmetric matrices. Already for $A=\left[ \begin{array}{cc} 1 & 3 \\ 3 & 1 \end{array} \right]$
which has eigenvalues $\lambda_1=-2, \lambda_2=4$ and with $d_1=1,d_2=1$, the inequality
$\lambda_2 \leq 2 d_2$ fails. It also does not extend to symmetric matrices
with non-negative eigenvalues, but also this does not work as
$A=\left[ \begin{array}{ccc} 1 & 1 & 1 \\ 1 & 1 & 1 \\ 1 & 1 & 1 \end{array} \right]$ shows, 
as this matrix has eigenvalues $0,0,3$ and diagonal entries $1,1,1$. The case of $n \times n$
matrices with constant entries $1$ is an example with eigenvalues $0$ and $n$ showing that no
estimate $\lambda_n \leq C d_n$ is in general possible for symmetric matrices, even when 
asking the diagonal entry to dominate the other entries. 

\section{Open ends}

\paragraph{}
When we looked at the data which led to Theorem~(1), they first 
indicated to investigate the {\bf Schur-Horn error}
$e_k = \sum_{j=1}^k d_j - \sum_{j=1}^k \lambda_j$. The Schur-Horn 
inequality tells that $e_k \geq 0$.  
We experimentally saw that the sequence $e_k$ has some asymptotics
looking like a parabola for large graphs.
An estimate $e_k \leq d_k$ would with the Schur-Horn inequality give
$0 \leq \sum_{j=1}^k d_j - \sum_{j=1}^k \lambda_j  
+ (d_{k+1}-\lambda_{k+1}) \leq d_k + d_{k-1}+ d_{k+1} - \lambda_{k+1}$ 
and give $\lambda_k \leq d_k+d_{k-1}+d_{k-2}$ for all $k$.
While $e_k \leq d_k$ does not hold in general, one can ask for reasonable 
bounds on the Schur-Horn error $e_k$ and ask what the sequence $e_k$ or 
the total Schur-Horn error $E(G)=\sum_k e_k$ tells about the graph. 
For which graphs with $n$ vertices is $E(G)$ minimal or maximal? 

\begin{comment}
n=RandomChoice[10+Range[30]]; m=Round[Random[]*n(n-1)/2]; 
G = RandomGraph[{n,m}];
K = Normal[KirchhoffMatrix[G]]; n = Length[K];
l = Sort[Eigenvalues[1.0*K]]; d = Sort[Table[K[[k, k]], {k, n}]];
Sd = Chop[Table[Sum[d[[j]], {j, 1, k}], {k, n}]];
Sl = Chop[Table[Sum[l[[j]], {j, 1, k}], {k, n}]];
ListPlot[(d-Sd - Sl), Joined -> True]
\end{comment} 

\paragraph{}
The Brouwer-Haemers bound $\lambda_k \geq {\rm max}(0,d_k-(n-k))$
is good for large $k$ but far from optimal for lower energies, especially if $n$
is large. We would definitely like to have better general lower bounds. 
\footnote{Part of the graph theory literature labels the eigenvalues 
in decreasing order. We use an ordering more familiar in the manifold case, 
where one has no largest eigenvalue and which also appears in the earlier 
literature, like in \cite{HornJohnson2012}.}
We have seen that the bound $\lambda_1 \geq d_1-(n-1)$ already does not hold 
in general for quivers with multiple-connections. Ribbon graphs were 
counter examples. Theorem~2 required to have 
no multiple connections. We believe that the original Brouwer-Haemers bound 
$\lambda_k \geq d_k-(n-k)-1$ (with the additional $-1$) could hold for 
{\bf every quiver and all $k \geq 2$}, 
similarly as in the finite simple graph case, where also the case $k=1$ is an exception. 
We have seen that for the ribbon graph and $k=1$, the discrepancy $(d_1-(n-1))-\lambda_1$ 
can be arbitrarily large, so that also the subtraction of $1$ did not help. 
But in experiments so far it appears only to be an exception for the lowest eigenvalue 
$\lambda_1$. 

\paragraph{}
We see for many {\bf Erd\"os-R\'enyi graphs} (finite simple graphs with $n$ 
vertices, where an edge is turned on with probability $p$) that an upper bound 
$\lambda_k \leq C d_k$ holds for most graphs
for any $C>1$, if the number of vertices is large. Theorem~1 shows that
$C=2$ works as $\lambda_k \leq d_k+d_{k-1} \leq 2 d_k$. 
One can even ask for a linear or almost linear bound in the Erdoes-Renyi case:
for all $C>1$ and all $p \in [0,1]$, we expect the probability of the
set of graphs in the Erd\"os-R\'enyi probability space 
$E(n,p)$ with $\lambda_k \leq C d_k$ for all $1 \leq k \leq n$ 
to go to $1$ as $n \to \infty$. There are several ways to produce probability
spaces on quivers that are analog to Erd\"os-R\'enyi spaces. For quivers, 
the number $m$ of edges is not bound by the number $n$ of vertices like for finite
simple graphs (there is no Kruskal-Cotana bound). One could start with 
Erd\"os-R\'enyi and use geometric distributions to 
add multiple edges loops. A more elegant probability space on quivers is to fix $V$ and 
look at all maps $V \to V \times V$ with uniform distribution and chose $m \geq 0$ random 
independent samples $E=\{ (f_k,g_k), k \leq m\}$, where $m$ has a geometric distribution 
${\rm P}[m=k]=p^{k}(1-p)$. The geometric distribution is the most obvious because it 
maximizes entropy on all probability distributions on $\mathbb{N}$ similarly as the exponential
does on $[0,\infty)$ or the Gaussian does on $\mathbb{R}$. 

\begin{comment}
n = RandomChoice[10 + Range[30]]; m = Round[Random[]*n (n - 1)/2];
G = RandomGraph[{n, m}];
K = Normal[KirchhoffMatrix[G]]; n = Length[K];
l = Sort[Eigenvalues[1.0*K]]; d = Sort[Table[K[[k, k]], {k, n}]];
Chop[Max[l - 1.2 d]]]
\end{comment} 

\begin{figure}[!htpb]
\scalebox{0.5}{\includegraphics{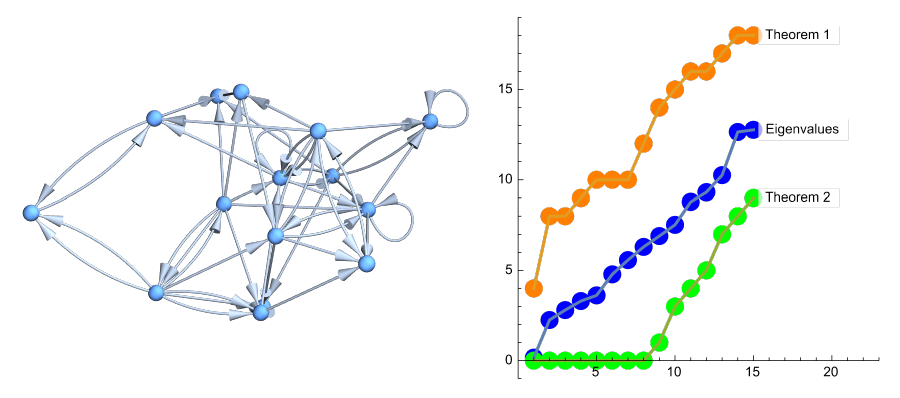}}
\label{Spectra}
\caption{
We see a random quiver in which the spectrum is compared with the
upper bound and lower bound according to Theorem~1 and Theorem~2. 
The lower bound does not hold for all quivers, as Theorem~2 requires 
to have no multiple connections. 
}
\end{figure}

\section{Illustration}

\paragraph{}
Here are a few examples of spectra with known upper and 
lower bounds:

\begin{figure}[!htpb]
\scalebox{0.35}{\includegraphics{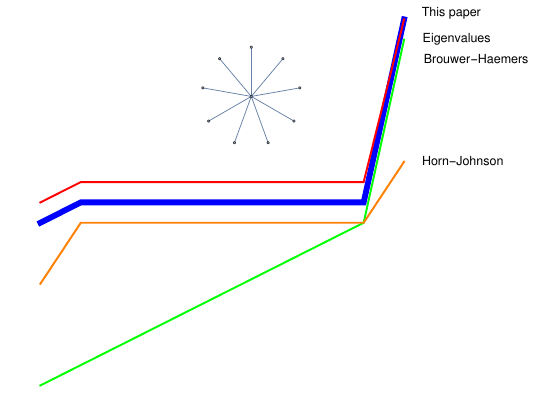}}
\scalebox{0.35}{\includegraphics{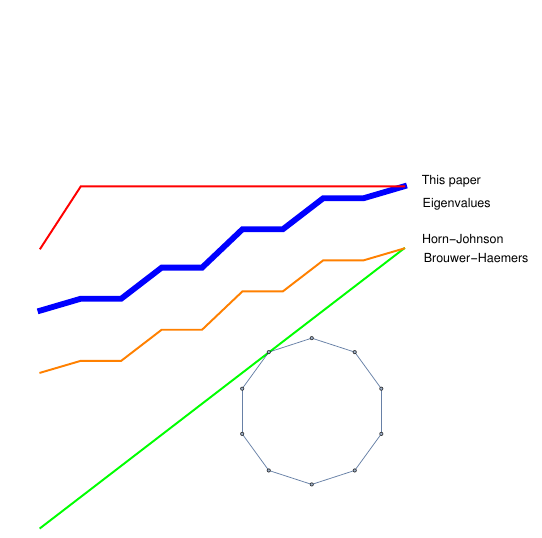}}
\scalebox{0.35}{\includegraphics{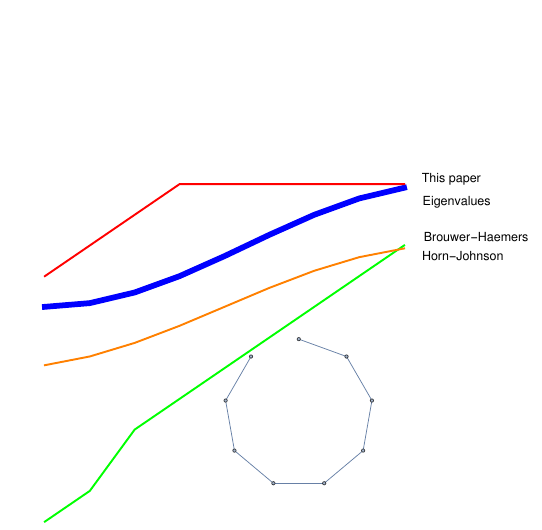}}
\scalebox{0.35}{\includegraphics{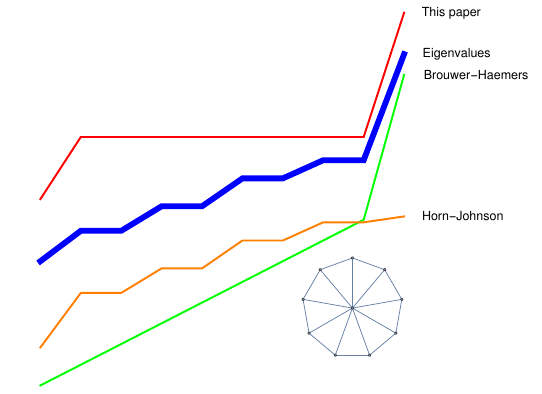}}
\scalebox{0.35}{\includegraphics{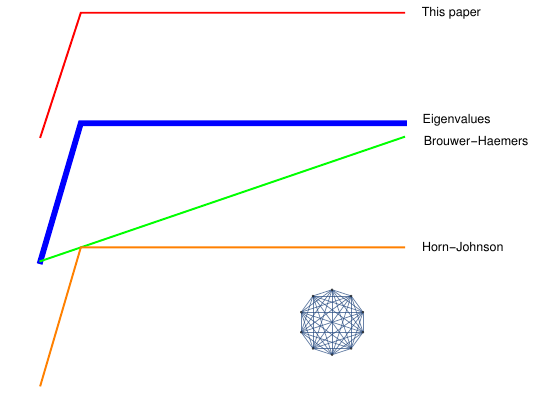}}
\scalebox{0.35}{\includegraphics{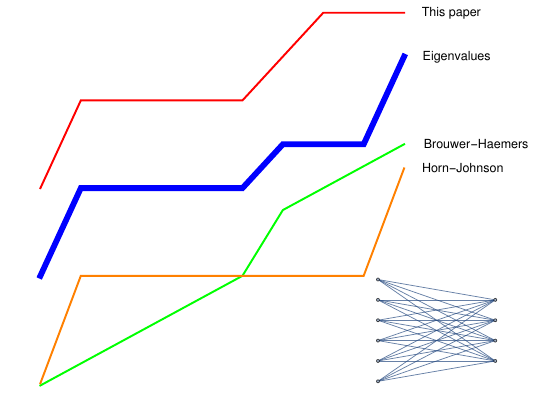}}
\scalebox{0.35}{\includegraphics{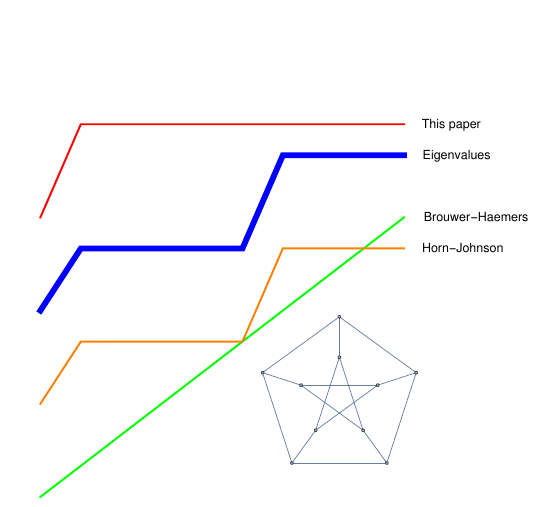}}
\scalebox{0.35}{\includegraphics{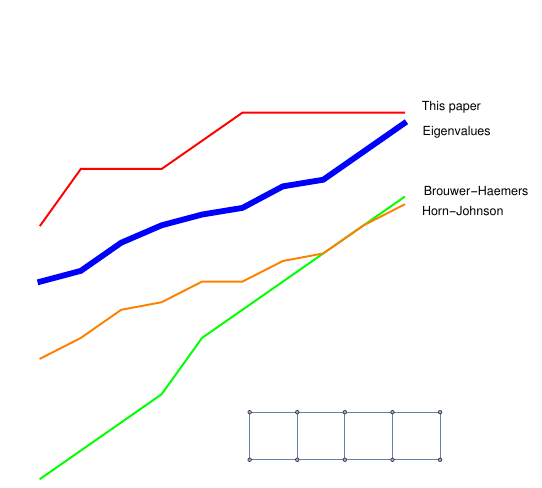}}
\scalebox{0.35}{\includegraphics{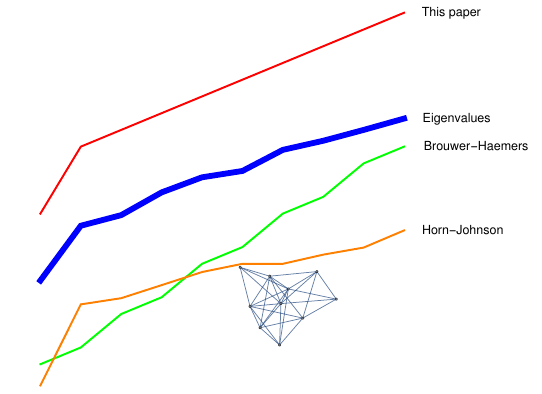}}
\scalebox{0.35}{\includegraphics{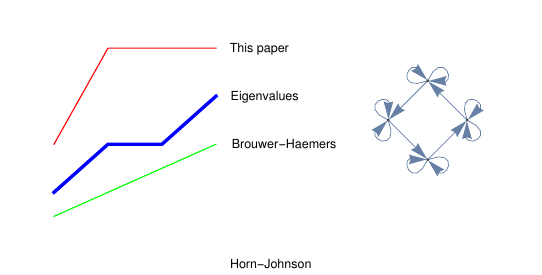}}
\label{Spectra}
\caption{
This figure shows examples of spectra of simple graphs and compares them
with known upper and lower bounds. 
We see first the {\bf Star, Cycle and Path graph}, then 
the {\bf Wheel, Complete, Bipartite graph},
and finally the {\bf Petersen, Grid and Random graph},  
all with $10$ vertices. The last is a graph with loops. 
The eigenvalues are outlined thick.
The graph above the spectrum is upper bound of Theorem~1.
The lower bound of Theorem~2 and the Horn-Johnson 
bounds are below. They can complement each other.
}
\end{figure}

\section{Code}

\paragraph{}
We add some Mathematica procedures illustrating Theorem~1 and Theorem~2 and allowing
a reader to experiment. One can for example generate random quivers for a number of
vertices $n$ and a number of edges $m$, the number of loops $l$ and the number of 
multiple connections $q$. We see as examples
the Goodwill hunting graph and the Koenigsberg graph \cite{Euler1736}. 
We had to program the Quiver Kirchhoff matrix procedure because Mathematica gives for 
an undirected multi-graph with loops the Kirchhoff matrix of the corresponding finite 
simple graph in which all multi connections and loops are discarded. 
We also check that if we take the Quiver Kirchhoff matrix of a
finite simple graph, we get the usual Kirchhoff matrix already implemented
in the Wolfram language. 

\begin{tiny}
\lstset{language=Mathematica} \lstset{frameround=fttt}
\begin{lstlisting}[frame=single]
QuiverGradient[s_]:=Module[{v=VertexList[s],e=EdgeList[s],n,m,F},
 n=Length[v];m=Length[e];    F=Table[0,{m},{n}];
 Do[{a,b}={e[[k,1]],e[[k,2]]};F[[k,a]]+=1;If[a!=b,F[[k,b]]+=-1],{k,m}];F];
QuiverKirchhoffMatrix[s_]:=Module[{F=QuiverGradient[s]},Transpose[F].F];
QuiverOneFormMatrix[s_]:=Module[{F=QuiverGradient[s]},F.Transpose[F]];
RandomQuiver[{n_,m_,l_,c_}]:=Module[{G=RandomGraph[{n,m}],v,e,q={},R,A},
 v=VertexList[G];e=EdgeRules[G];R=RandomChoice;A=Append;U[u_]:=u[[1]]->u[[2]];
 Do[x=R[v];q=A[q,x->x],{l}];Do[q=A[q,U[R[e]]],{c}];Graph[v,Join[e,q]]];
RandomQuiver[{n_,m_}]:=Module[{v=Range[n],e},
  e=Table[RandomChoice[v]->RandomChoice[v],{m}]; Graph[v,e]];
ErdoesRenyi[{n_,p_}]:=RandomQuiver[{n,Random[GeometricDistribution[p]]}];
SpectralComparison[G_]:=Module[{K=QuiverKirchhoffMatrix[G],n},n=Length[K]; 
Eigen = Sort[Eigenvalues[1.0*K]];  d=Sort[Table[K[[k,k]],{k,n}]]; 
Upper = Table[d[[k]]+If[k>1,d[[k-1]],0],{k,n}];
Lower = Table[Max[0,d[[k]]-(n-k)], {k,n}];
S1=ListPlot[{Lower,Eigen,Upper},Filling->Bottom];
S2=ListPlot[{Lower,Eigen,Upper},Filling->Bottom,Joined->True];
Show[{S1,S2},Ticks->None,PlotRange->{{1,n},{0,Max[Upper]}}]];
EulerChar[G_]:=Length[VertexList[G]]-Length[EdgeList[G]]; 
b0[G_]:=Length[NullSpace[QuiverKirchhoffMatrix[G]]];
b1[G_]:=Length[NullSpace[QuiverOneFormMatrix[G]]];
Cohomology[G_]:=b0[G]-b1[G]; Betti[G_]:={b0[G],b1[G]};
Fvector[G_]:={Length[VertexList[G]],Length[EdgeList[G]]}; 
G=RandomQuiver[{25,205,30,40}]; 
Print["Euler-Poincare Check: ",EulerChar[G]==Cohomology[G]];
G=UndirectedGraph[RandomQuiver[{15,35,0,0}]];
Print["Matrix : ",QuiverKirchhoffMatrix[G]==Normal[KirchhoffMatrix[G]]];
G = RandomQuiver[{105,1005,300,2000}];         U1=SpectralComparison[G];
G =Graph[{1->2,2->4,4->1,2->3,2->3}];          U2=SpectralComparison[G]; 
Print["GoodWillHunting: ","Betti=",Betti[G],"  Fvector=",Fvector[G]]; 
G =Graph[{1->2,2->1,4->1,1->4,2->3,3->4,1->3}];U3=SpectralComparison[G];
Print["Euler Koenigsberg: ","Betti=",Betti[G],"  Fvector=",Fvector[G]]; 
Print["Theorem Illustration: "]; GraphicsRow[{U1,U2,U3}]
GraphicsRow[Table[GraphPlot3D[RandomQuiver[{10^k,1000}]],{k,1,3}]]
\end{lstlisting}
\end{tiny}

\begin{figure}[!htpb]
\scalebox{0.6}{\includegraphics{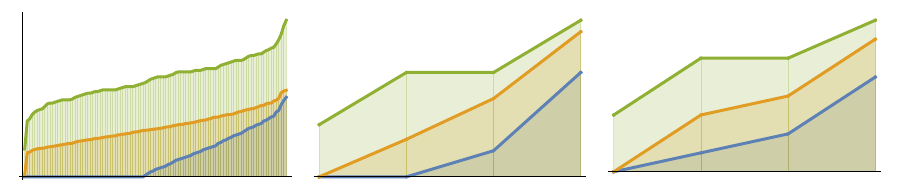}}
\label{Code}
\caption{
This is the output graphics of the above code. The eigenvalue graph
is sandwiched in general. The upper bound holds for all 
quivers, the lower bound only for quivers without multiple
connections.  The first case shows a random quiver, the 
second the Good-Will-Hunting multi-graph with Betti
vector $b=(1,2)$ and $f$-vector $f=(4,5)$, the third is
the K\"onigsberg multi graph with $b=(1,4)$ and $f=(4,7)$. 
}
\end{figure}

\begin{figure}[!htpb]
\scalebox{0.6}{\includegraphics{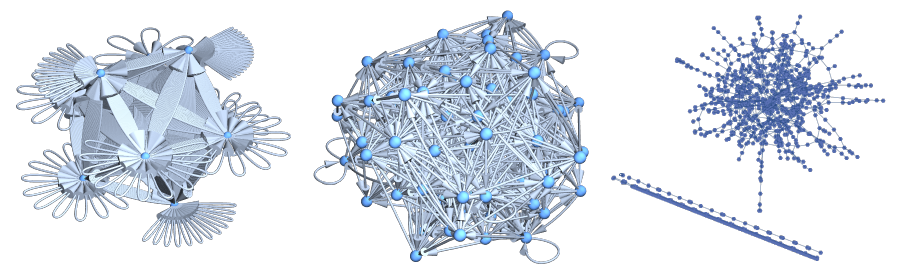}}
\label{erdoes-Renyi}
\caption{
Three quiver Erdoes-Renyi choices with $m=1000$ random edges
$a \to b$, where $a,b$ are randomly chosen vertices and each
$(a,b)$ produces either an edge ($a \neq b$) or a loop ($a=b$). 
The first has $n=10$ nodes, the second $n=100$ nodes,
the third $n=1000$, keeping $m=1000$ edges.
}
\end{figure}

\bibliographystyle{plain}

\end{document}